\documentclass[a4paper,UKenglish,cleveref, autoref, thm-restate]{lipics-v2021}

\bibliographystyle{plainurl}

\title{Coinductive control of inductive data types} 


\author{Paige Randall North}{Department of Mathematics and Department of Information and Computing Sciences, Utrecht University, Netherlands}{}{}{This material is based upon work supported by the Air Force Office of Scientific Research under award number FA9550-21-1-0334.}

\author{Maximilien Péroux}{Department of Mathematics, University of Pennsylvania, United States}{}{}{}

\authorrunning{P.\,R. North and M. Péroux} 


\Copyright{Paige Randall North and Maximilien Péroux} 

\ccsdesc[500]{Theory of computation~Categorical semantics}
\ccsdesc[500]{Theory of computation~Algebraic semantics}
\ccsdesc[500]{Theory of computation~Type structures}

\keywords{Inductive types, enriched category theory, algebraic data types, algebra, coalgebra} 



\nolinenumbers 


\usepackage{amssymb}
\usepackage{amsmath}
\usepackage{amsthm}
\usepackage{amsfonts}
\usepackage{amsxtra}
\usepackage[all]{xy}
\usepackage{verbatim}
\usepackage{color}
\usepackage[usenames,dvipsnames]{xcolor}
\usepackage{graphicx}
\usepackage{comment}
\usepackage{tikz-cd}
\usepackage{scalerel}
\usetikzlibrary{matrix,arrows}
\usepackage{extarrows}
\usepackage{url}
\usepackage{bbm}
\usepackage{bbold}
\usepackage{stmaryrd}
\usepackage{graphicx}

\newcommand{\C}{\mathcal C}

\newcommand{\I}{\mathbb I}
\newcommand{\Set}{{\mathsf{Set}}}
\newcommand{\N}{\mathbb N}
\newcommand{\coaN}{\mathbb N^-}
\newcommand{\eN}{{\mathbb N^\infty}}

\newcommand{\bcoalg}{{\mathsf{CoAlg}}}

\newcommand{\balg}{{\mathsf{Alg}}}
\newcommand{\ubalg}{{\underline{\mathsf{Alg}}}}

\newcommand{\id}{\mathsf{id}}

\newcommand{\ind}[1]{\llbracket {#1} \rrbracket}
\newcommand{\inthom}[3]{\underline{#1}(#2,#3)}

\newcommand{\term}{*}

\newcommand{\op}{\mathsf{op}}
\newcommand{\ev}{\mathsf{ev}}
\newcommand{\standalg}[1]{{\mathbb {#1}}}
\newcommand{\standcoalg}[1]{{\mathbb {#1}^\circ}}

\renewcommand{\lim}{\mathsf{lim}}

\renewcommand{\partial}{\textsf{\reflectbox{\textup 6}}}
\newcommand{\phom}{\mu}
\newcommand{\cofree}{\mathsf{Cof}}
\newcommand{\free}{\mathsf{Fr}}

\newcommand{\tone}{\mathtt{t}}

\usepackage[colorinlistoftodos,prependcaption,textsize=tiny]{todonotes}


\newtheorem{construction}[theorem]{Construction}

\EventEditors{Paolo Baldan and Valeria de Paiva}
\EventNoEds{2}
\EventLongTitle{10th Conference on Algebra and Coalgebra in Computer Science (CALCO 2023)}
\EventShortTitle{CALCO 2023}
\EventAcronym{CALCO}
\EventYear{2023}
\EventDate{June 19--21, 2023}
\EventLocation{Indiana University Bloomington, IN, USA}
\EventLogo{}
\SeriesVolume{270}
\ArticleNo{15}

\begin{document}

\maketitle

\begin{abstract}
We combine the theory of inductive data types with the theory of universal measurings. By doing so, we find that many categories of algebras of endofunctors are actually enriched in the corresponding category of coalgebras of the same endofunctor.
The enrichment captures all possible partial algebra homomorphisms, defined by measuring coalgebras.
Thus this enriched category carries more information than the usual category of algebras which captures only total algebra homomorphisms.
We specify new algebras besides the initial one using a generalization of the notion of initial algebra.
\end{abstract}



\section{Introduction}

In both the tradition of functional programming and categorical logic, one takes the perspective that most data types should be obtained as initial algebras of certain endofunctors (to use categorical language). For instance, the natural numbers are obtained as the initial algebra of the endofunctor $X \mapsto X + 1$, assuming that the category in question (often the category of sets) has a terminal object $1$ and a coproduct $+$. Much theory has been developed around this approach, which culminated in the notion of W-types \cite{Mar84}.

In another tradition, for $k$ a field, it has been long understood (going back at least to Wraith, according to \cite{measuring}, and Sweedler \cite{sweedler}) that the category of $k$-algebras is naturally enriched over the category of $k$-coalgebras, a fact which has admitted generalization to several other settings (e.g. \cite{measuring, vasila, Per22, MRU22}).
In this paper, we extend this theory to the setting of an endofunctor on a category -- in particular those endofunctors that are considered in the theory of W-types.

This work is thus the beginning of a development of an analogue of the theory of W-types -- not based on the notion of initial objects in a \emph{category} of algebras, but rather on generalized notions of initial objects in a \emph{coalgebra enriched category} of algebras. Our main result (\cref{thm:enriched}) states that the categories of algebras of endofunctors considered in the theory of W-types are often enriched in their corresponding categories of coalgebras.
The hom-coalgebras of our enriched category carry more information than the hom-sets in the unenriched category that is usually considered in the theory of W-types.
Because of our passage to the enriched setting, we have more precise control than in the unenriched setting, and we are able to specify more data types than just those which are captured by the theory of W-types. We do this by generalizing the notion of initial algebra, taking inspiration from the notion of weighted limits.
This general theory is presented in \cref{sec:umeas}.

But first, in \cref{section: guided examples}, we begin our paper with an enlightening example which serves as an illustration of the relevance of our enriched theory and as a motivation for the more general setting. Therein, we provide explicit calculations for the case of algebras over the endofunctor $X\mapsto X+1$ on $\Set$. In that example, we illustrate that it is appropriate to interpret the elements of the hom-coalgebras as \emph{partial homomorphisms}.

Indeed, in the classical Sweedler theory, 
the enrichment in coalgebras can also be understood as encoding a notion of partial homomorphism.
Though we do not study $k$-algebras in this paper, we conclude this introduction with details of that classical theory. A \emph{measuring} from a $k$-algebra $A$ to a $k$-algebra $B$, in the sense of Sweedler \cite{sweedler}, is a $k$-coalgebra $C$ together with a linear homomorphism $\phi\colon C\otimes_k A\rightarrow B$ that is compatible with the multiplication and identities of $A$ and $B$. A measuring from $A$ to $B$ is equivalently a $k$-coalgebra $C$ together with a $k$-linear map $\phi: C \to A \to B$ such that
\[
\phi_c(aa')=\sum_{i=1}^n \phi_{c^{(1)}_i}(a)\phi_{c^{(2)}_i}(a'), \quad \text{and} \quad \phi_c(1_A)=\varepsilon(c)1_B,
\]
for all $c \in C$ and $a,a' \in A$
where $\Delta(c)= \sum_{i=1}^n c^{(1)}_i\otimes c^{(2)}_i$ is the comultiplication $\Delta\colon C\rightarrow C\otimes_k C$ and $\varepsilon\colon C\rightarrow k$ is the counit of $C$.
Therefore the $k$-linear maps $\phi_c\colon A\rightarrow B$ can be regarded as partial algebra homomorphisms, and the elements $c\in C$ can be interpreted as measuring how far each partial homomorphism $\phi_c$ is from being a total homomorphism. For instance when $\Delta(c)=c\otimes c$, we have that $\phi_c\colon A\rightarrow B$ is a total algebra homomorphism. Now we proceed to tell an analogous story about endofunctors.



\section{Illustrative example: \texorpdfstring{$\id +1$}{id + 1}}\label{section: guided examples}

In this section, we illustrate our results in the context of one example: the endofunctor that sends $X \mapsto X + 1$ (the coproduct of $X$ and a terminal set $1$) in $\Set$, the category of sets. The initial algebra of this endofunctor is $\N$, the natural numbers, and thus this endofunctor is one of the most basic and important examples in the theory of W-types.

This section is one very long worked example of our general, categorical theory which follows in \cref{sec:umeas}.

We first review the classical story in \cref{ssec:preliminaries}, and afterwards our goal is to explain how the category of algebras is naturally enriched in the category of coalgebras of this functor and how we can use this extra structure to generalize the notion of \emph{initial algebra} to capture more algebras than just $\N$. So, in \cref{ssec:partial homomorphisms} we explore by hand a notion of partial homomorphism between algebras that will be captured more formally later in the enrichment.
Next, in \cref{ssec: Composing partial homomorphisms}, we explore the structure that this enrichment gives us. In \cref{ssec: The convolution algebra}, we introduce a computational tool and compute explicitly some of the hom-objects of our enrichment.
Finally, in \cref{ssec: Generalizing initial objects}, we use this extra structure to generalize the notion of initial object, and we describe some of the algebras that can be specified in this way.

Note that many of the proofs in this paper were relegated to the appendices, which do not appear with this, published, version. Thus, we repeatedly reference proofs in the full version, \cite{NP23}.


\subsection{Preliminaries}\label{ssec:preliminaries}

Here, we review the established theory regarding algebras and coalgebras of $\id +1$ that we will use. See, for instance, \cite[Ch.~3]{Rut19} for details.

We let $\balg$ denote the category of algebras of $\id$ + 1, and we let $\bcoalg$ denote the category of coalgebras of $\id$ + 1. Recall that an algebra is a pair $(A, \alpha)$ of a set $A$ together with a function $\alpha: A + 1 \to A$ (equivalently, a \emph{successor} endofunction $\alpha|_A : A \to A$ and a \emph{zero} $\alpha|_1 : 1 \to A$), and a coalgebra is a pair $(C, \chi)$ of a set $C$ together with a function $\chi: C \to C + 1$, i.e., a partial endofunction. The initial object of $\balg$ is $(\N, \alpha_\N)$, where $\N$ is the usual natural numbers, $\alpha_\N|_\N$ is the usual successor function  $x \mapsto x + 1$ and $\alpha_\N|_1$ picks out $0 \in \N$. The terminal object of $\bcoalg$ is $(\eN, \chi_\eN)$ where $\eN$ is the extended natural numbers $\N + \{\infty\}$, and the map $\chi_\eN : \eN \to \eN + 1$ takes $0 \in \eN$ to the element $\tone \in 1$ and all other $x \in \eN$ to $x - 1 \in \eN$.

Note that because $\N$ is initial in $\balg$, any algebra $(A, \alpha_A)$ gets a function $!_A: \N \to A$, and thus it will be useful write $n_A$ for $!_A(n)$. That is, $0_A$ is the zero of $A$, $1_A$ is the successor of $0_A$, etc. For $a \in A$, we will often also write $a + 1$ as shorthand for $\alpha_A (a)$ (especially when the algebra structure morphism, here $\alpha_A$, does not have an explicit name).

Dually, because $\eN$ is terminal in $\bcoalg$, there is a function $\ind{-} : C \to \eN$ for any coalgebra $(C, \chi_A)$, and we will say that the \emph{index} of a $c \in C$ is $\ind{c}$. Then the elements of $C$ that have index $0$ are those $c$ such that $\chi_C(c) = \tone$, those that have index $1$ are all those other $c$ such that $\chi_C^2(c)  = \tone$, etc. For $c \in C$ where $\ind{c} \neq 0 $, we will also often write $c -1$ to denote $\chi_C (c)$ (especially when the coalgebra structure morphism does not have an explicit name).

Besides $\N$, the \emph{initial algebra}, we will often consider \emph{preinitial algebras}, that is, algebras $A$ for which $!_A: \N \to A$ is epic. The nontrivial preinitial algebras are of the form
$\standalg{n} := (\{0,1,...,n\}, \alpha_\standalg{n})$ for any $n \in \N$. Here, $\alpha_\standalg{n}$ is the algebra structure that $\{0,1,...,n\}$ inherits as the quotient of $\N$ in $\Set$ that identifies all $m \geq n$ (see \cite[Example 39]{NP23} and the preceding \cite[Lemma 38]{NP23}).

Dually, besides $\eN$, we will often consider \emph{subterminal coalgebras}, that is, coalgebras $C$ for which $\ind{-}: C \to \eN$ is monic. The nontrivial ones are $\standcoalg{n}$ with underlying subset $\{0,1,...,n\}$ of $\eN$, $\coaN$ with underlying subset $\N$, and $\I$ with underlying subset $\{\infty\}$. These all inherit coalgebra structures from $\eN$ (see \cite[Example 43]{NP23} and the preceding \cite[Lemma 42]{NP23}).

\subsection{Partial homomorphisms}\label{ssec:partial homomorphisms}
Consider algebras $A$ and $B$. We are, first of all, most interested in algebra homomorphisms $f:A\rightarrow B$ (which we might call \emph{total} algebra homomorphisms to distinguish them from the notion of \emph{partial} algebra homomorphisms which we are about to introduce). 
This means that we have $(\mathsf{H1})$ $ f(0_A)=0_B$ and $(\mathsf{H2})$ $f(a+1)=f(a)+1$ for all $a\in A$. If $A$ is $\N$, we know that there is a total algebra homomorphism $\N \to B$, and we can use $(\mathsf{H1})$ and $(\mathsf{H2})$ to inductively construct this homomorphism. 

But depending on the nature of $A$ and $B$, it might happen that we can only guarantee $(\mathsf{H1})$ and $(\mathsf{H2})$ hold for some, but not all, $a \in A$, and thus an attempt to construct a total algebra homomorphism $A \to B$ inductively might fail at some point. Perhaps $A$ is a preinitial algebra $\standalg{n}$ and $B$ is $\N$. We can try to construct a total homomorphism, so we set $f(0_\standalg{n}) :=0_\N$ following $(\mathsf{H1})$, $f(1_\standalg{n}) :=1_\N$ following $(\mathsf{H2})$, etc. This works until we get to $n_\standalg{n}$. Since $n_\standalg{n}$ is the successor of both ${(n-1)}_\standalg{n}$ and ${n}_\standalg{n}$, $(\mathsf{H2})$ tells us to send $n_\standalg{n}$ both to $n_\N$ and $(n+1)_\N$. We might say that induction worked only up to the $n$th step, or that we can define a \emph{$n$-partial homomorphism}.

We formalize this idea in the following way, in which we inductively construct partial homomorphisms in an attempt to approximate a total homomorphism. In our first attempt at formalizing this idea, \cref{con: partial induction} below, we make the simplifying assumption that $A$ is preinitial -- simplifying because then there is at most one homomorphism $A \to B$. We will almost immediately drop this assumption in the more general \cref{definition: partial homomorphism}.

\begin{construction}[Partial induction]\label{con: partial induction}
We seek to inductively approximate a homomorphism $A \rightarrow B$ when $A$ is preinitial. We define a sequence of functions $f_c: A \to B$ as follows.
\begin{description}
    \item[\emph{Initial step (P1)}] Define $f_0\colon A\rightarrow B$ by $f_0(a):=0_B$ for all $a\in A$. 
    \item[\emph{Inductive step}] Define $f_{c+1} \colon A\rightarrow B$ by:
    \begin{description}
        \item[\emph{(P2)}] $f_{c+1}(0_A):=0_B$;
        \item[\emph{(P3)}] $f_{c+1}(a+1):=f_{c}(a)+1$.
    \end{description}
\end{description}
We stop when $f_{c+1}$ is not well-defined.

If we have defined $f_c$ for all $c \in \N$, then we will say that we have defined an \emph{$\infty$-partial homomorphism}.
Otherwise, if we have only defined $f_c$ for all $c \in \{0, ..., n\}$, we will say that we have defined an \emph{$n$-partial homomorphism}.
\end{construction}

Since $A$ is preinitial, it is of the form $\standalg{n}$ or $\N$, and so every element is of the form $x_A$ for $x \in \N$. Thus, $f_c(x_A)$ is $x_B$ for $x \leq c$ and otherwise $c_B$. In particular, if $A = \standalg{n}$, then $f_n = f_m$ for all $m \geq n$. Now we can see that there is an $\infty$-partial homomorphism $A \to B$ if and only if there is a total homomorphism $f: A \to B$. Indeed, if we have an $\infty$-partial homomorphism $A \to B$ consisting of an $f_c : A \to B$ for all $c \in \N$, then we can define a `diagonal' total homomorphism $f: A \to B$ by $f(x_A) := f_x (x_A)$.
Conversely, if we have a total homomorphism $f: A \to B$, there is no obstruction to the inductive steps in defining a $\infty$-partial homomorphism. Thus, we can conflate the notions of a $\infty$-partial homomorphism and a total homomorphism $A \to B$ when $A$ is preinitial.

Notice that in the term ``$n$-partial homomorphism'' in the above \cref{con: partial induction}, $n$ takes values in $\eN$, the terminal coalgebra of our endofunctor. In fact, the above construction follows the
similar pattern of the measurings of algebras over a field that we mentioned in the introduction.  So now we make the following definition in which we encode the coalgebra directly. 
This allows us to generalize \cref{con: partial induction}, dropping the hypothesis that $A$ is preinitial.

\begin{definition}
    [Measuring, cf. \cref{definition: measuring coalgebra} and \cref{prop: measuring for closed}]
    \label{definition: partial homomorphism}
    Consider algebras $A$ and $B$ and a coalgebra $C$.
    A \emph{measuring from $A$ to $B$ by $C$} is a function $f\colon C \rightarrow A \rightarrow B$ such that:
    \begin{description}
        \item[\emph{(M1)}] $f_c (0_A)=0_B$ for all $c \in C$;
        \item[\emph{(M2)}] $f_c(a+1)=0_B$ for all $\llbracket c \rrbracket=0$ and for all $a\in A$;
        \item[\emph{(M3)}] $f_c(a+1)=f_{c-1}(a)+1$ for $\llbracket c \rrbracket \geq 1$ and for all $a\in A$. 
    \end{description}

    We write $\phom_C(A,B)$ for the set of measurings from $A$ to $B$. This defines a functor $\phom: \bcoalg^\op \times \balg^\op \times \balg \to \Set$.

    For a measuring $f$ and an element $c \in C$, we call $f_c$ a \emph{$C$-partial homomorphism}.
\end{definition}

\begin{example}
    Suppose that $A$ is preinitial, so that in particular every element of $A$ is of the form $0_A$ or $a + 1$.
    
    Then there is a measuring from $A$ to $B$ by $\standcoalg{n}$ if the induction of \cref{con: partial induction} creates an $n$-partial homomorphism, and in this case the functions of the form $f_c$ constructed in \cref{con: partial induction} are the same as those specified in \cref{definition: partial homomorphism}.
    
    There is a measuring by $\coaN$ if the induction never fails, and again the functions $f_c$ from \cref{con: partial induction} and \cref{definition: partial homomorphism} coincide. Now, note that exhibiting a measuring by $\eN$ amounts to exhibiting a measuring by $\coaN$ together with a total algebra homomorphism $f_\infty$. For such an $A$, then, exhibiting a measuring by $\coaN$ is equivalent to exhibiting one by $\eN$.
\end{example}

The reader might wonder why \cref{definition: partial homomorphism} does not require $\mathsf{(M2')}$ $f_c(x) = 0_B$ for any $\ind{c} = 0$ and any $x$, but rather only requires $f_c(x) = 0_B$ when $\ind{c} = 0$ and $x$ is either the zero or a successor. In the previous example, when $A$ is preinitial, every $x \in A$ is either the zero or a successor, so there is no difference between these two requirements. In the following example, we consider an algebra $A$ where this is not the case, and illustrate why we only stipulate $\mathsf{(M2)}$ and not $\mathsf{(M2')}$.

\begin{example}
Consider the algebra $A$ with underlying set $\N + \N$, where we will notate the elements of the first copy of $\N$ as $n_A$, and the elements of the second copy as $n'$. The zero of $A$ is then $0_A$ and the successors are given by $n_A + 1 := (n+1)_A$ and $n' + 1 := (n+1)'$. Total homomorphisms $A \to \N$ are determined by the image of $0'$ in $\N$.

If we require $\mathsf{(M2')}$ instead of $\mathsf{(M2)}$ in \cref{definition: partial homomorphism}, then in a measuring by $\eN$, $f_0 (0') = 0_\N$ by $\mathsf{(M2')}$, and in general $f_n (n') = n_\N$ by $\mathsf{(M3)}$.

However, following \cref{definition: partial homomorphism} as written, in a measuring by $\eN$, $f_0 (0')$ may be anything, say $z \in \N$ and then general $f_n (n') = (z + n)_\N$.

Thus, \cref{definition: partial homomorphism} does generalize the idea of inductively approximating a total homomorphism $A \to \N$ from \cref{con: partial induction}.
\end{example}

Now notice another difference between \cref{con: partial induction} and \cref{definition: partial homomorphism}. In \cref{con: partial induction} we continue the induction as far as we can, but there is nothing of this flavor in \cref{definition: partial homomorphism}. For instance, if there is a total algebra homomorphism $A \to B$, then in the process described by \cref{con: partial induction}, we will inductively construct $f_c$ for all $c \in \N$. However, following \cref{definition: partial homomorphism}, we could say that $A \to B$ is measured by $\standalg{0}$ (which only amounts to exhibiting $f_0$), without making any claim about it being measured by other coalgebras -- it does not ask us to find any kind of maximum coalgebra $C$ that measures $A \to B$. To remedy this, we make the following definition.

\begin{definition}
    [Universal measuring, cf. \cref{definition: universal measuring}]
    \label{def: first universal measuring}
    Let $A$ and $B$ be algebras.

    We define the category of measurings from $A$ to $B$ to be the category whose objects are pairs $(C;f)$ of a coalgebra $C$ and a measuring $f: C \to A \to B$, and whose morphisms $(C;f) \to (D;g)$ are coalgebra homomorphisms $d : C \to D$ such that $f = g d$.

    The \emph{universal measuring from $A$ to $B$}, denoted $(\ubalg(A,B); u)$, is the terminal object in the category of measurings from $A$ to $B$. That is, if $(C;f)$ is a measuring from $A$ to $B$, then there is a unique morphism $!: C \to \ubalg(A,B)$ that makes the following diagram commute.
    \[
         \begin{tikzcd}
             C \ar[r,"f"] \ar[d,"!", dashed] & A \to B \\
             \ubalg(A,B) \ar{ur}[swap]{u}
         \end{tikzcd}
    \]
\end{definition}

\begin{example}\label{ex:universal measuring for preinitial}
Again, suppose that $A$ is preinitial.
In this case, the universal measuring is a subterminal coalgebra \cite[Lemma 37]{NP23}. We have shown that if the induction of \cref{con: partial induction} creates an $n$-partial homomorphism, then the maximum subterminal coalgebra that measures $A \to B$ is $\standcoalg{n}$, so this is the universal measuring. And if the induction of \cref{con: partial induction} creates an total homomorphism, then the maximum subterminal coalgebra that measures $A$ to $B$ is $\eN$ itself, so this is the universal measuring.
We will also show this fact more directly (i.e., without reference to \cite[Lemma 37]{NP23}) in \cref{ssec: The convolution algebra} below.
\end{example}

Since composing an arbitrary coalgebra homomorphism $C \to \ubalg(A,B)$ with $u$ produces a measuring $C \to A \to B$, we obtain a bijection, natural in $C,A,B$,
\[\phom_C(A,B) \cong \bcoalg(C, \ubalg(A,B))\]
showing that $\phom_-(A,B)$ is represented by $\ubalg(A,B)$.
In \cref{theorem: equalizer formula for universal measuring}, we will see that $\ubalg(A,B)$ always exists (for this and other endofunctors of interest). The coalgebra $\ubalg(A,B)$ will constitute the hom-coalgebra from $A$ to $B$ of our enriched category of algebras (\cref{thm:enriched}).

Now, note that a measuring by the coalgebra $\I$ is a total algebra homomorphism. Thus,
\[\balg(A,B) \cong \phom_\I(A,B) \cong \bcoalg(\I, \ubalg(A,B))\]
and so we find the hom-sets of the category of algebras can be easily extracted from $\ubalg(A,B)$ -- a statement that aligns with our intuition that $\balg(A,B)$ is the set of total algebra homomorphisms and $\ubalg(A,B)$ is the coalgebra of partial algebra homomorphisms.

\subsection{Composing partial homomorphisms}\label{ssec: Composing partial homomorphisms}
We will only prove that the universal measuring coalgebras form the hom-objects of our enriched category in \cref{thm:enriched}, but we can already work out how this enrichment behaves. Thus, in this section, we describe the composition and identities of this enriched category.

Given algebras $A, B,$ and $T$, we can always compose total homomorphisms $f: A\rightarrow B$ and $g: B\rightarrow T$ to form a total homomorphism $g \circ f: A\rightarrow T$. 
We wish to do the same for our partial homomorphisms. Consider coalgebras $C$ and $D$, a $C$-partial homomorphism $f_c:A\rightarrow B$, and a $D$-partial homomorphism $g_d : B\rightarrow T$. 
We can compose $g_d$ and $f_c$ as functions to obtain $g_d \circ f_c : A \to T$, and we claim that this is a $(D\times C)$-partial homomorphism.
Indeed $D\times C$ has a coalgebra structure where $\ind{(d,c)} = \min(\ind{d},\ind{c})$ and
$
(d,c)-1=
    (d-1, c-1)$ if $\ind{(d,c)}>0
$ for $(d,c)\in D\times C$.
This induces a symmetric monoidal structure on $\bcoalg$ for which $\I$ is the unit (\cref{prop:sym-mon}), and one can verify that $g_d \circ f_c$ is a $(D \times C)$-partial homomorphism.

Now we have constructed a function
\begin{align*}
    \phom_D(B,T) \times \phom_C(A,B)& \longrightarrow \phom_{D \times C}(A,T)\\
    (g,f) & \longmapsto g\circ f
 \end{align*}
 where $g\circ f: (D \times C)\rightarrow A\to T$ is defined by $(g \circ f)_{(d,c)}=g_d\circ f_c$. Thus, by the universal property of $\ubalg$, we obtain a function 
 \begin{align*}
     \bcoalg(D,\ubalg(B,T)) \times \bcoalg(C,\ubalg(A,B))& \longrightarrow \bcoalg(D \times C,\ubalg(A,T)).
 \end{align*}
 Applying this function to $(\id_{\ubalg(B,T)}, \id_{\ubalg(A,B)})$, we obtain a composition morphism $\circ: \ubalg(B,T)  \times \ubalg(A,B) \to \ubalg(A,T)$ such that for $(d,c) \in \ubalg(B,T)  \times \ubalg(A,B)$, we have $u_d \circ u_c = u_{d \circ c}$ (where $u$ is as in \cref{definition: universal measuring}).

Similarly, for any algebra $A$ we might ask if there is an identity $\id_A: \I \to \ubalg(A,A)$. We showed above that $\balg(A,A) \cong \bcoalg(\I, \ubalg(A,A))$. Thus, we take the image of $\id_A \in \balg(A,A)$ under this bijection.

We leave it as an exercise for the interested reader to show by hand that this constitutes an enrichment of $\balg$ in $(\bcoalg, \otimes, \I)$, i.e., that all the axioms for an enriched category are satisfied by this choice of composition and identities. We will instead leave this result (\cref{thm:enriched}) to the general setting. 
 
\subsection{The convolution algebra}\label{ssec: The convolution algebra}

We now give an alternative representation of $\phom_C(A,B)$ that can be directly defined and computed. In this section, we will be able to use it to compute $\ubalg(A,B)$ without appealing to \cite[Lemma 37]{NP23}.

 In \cref{definition: partial homomorphism}, we defined $\phom_C(A,B)$ to be a certain subset of $\Set(C, \Set(A,B)) \cong \Set(A, \Set(C,B))$. We now identify that subset as the subset of (total) algebra homomorphisms $A \to \Set(C,B)$ with a particular \emph{convolution} algebra structure on $\Set(C,B)$.

\begin{definition}
    [Convolution algebra, cf. \cref{def: second convolution algebra}]
    \label{def: first convolution algebra}
Given a coalgebra $(C, \chi_C)$ and an algebra $(B, \alpha_B)$, define the \emph{convolution algebra} $[C,B]$ to be the algebra whose underlying set is $\Set(C,B)$, whose zero is the constant function $C \to B$ at $0_B$, and where $f+ 1$ is defined by
\[
(f+1)(c)= \begin{cases}
    0_B & \text{if }\ind{c}=0;\\
    f(c-1)+1 &  \text{if }\ind{c}>0.
\end{cases}
\]
This defines a functor $[-,-] : \bcoalg^\op \times \balg \to \balg$.
\end{definition}

Given a coalgebra $(C, \chi_C)$ and an algebra $(B, \alpha_B)$, a function $m :C \rightarrow A\rightarrow B$ is a measuring if and only if the associated $\widetilde{m} : A \rightarrow C \rightarrow B$ (under the bijection $\widetilde{-}: \Set(C, \Set(A,B)) \to \Set(A, \Set(C,B))$) underlies a homomorphism $A \to [C , B]$ of algebras. Indeed, $\mathsf{(M1)}$ of \cref{definition: partial homomorphism} for $m$ is equivalent to $\mathsf{(H1)}$ $\widetilde m(0_A) = 0_{[C,B]}$ and criteria $\mathsf{(M2)}$ and $\mathsf{(M3)}$ for $f$ are equivalent to $\mathsf{(H2)}$ $\widetilde{m} (a+1) = \widetilde{m} (a) +1 $.


Therefore, we find the following string of bijections, natural in $C,A,B$,
\begin{align}\label{eq:univ prop of conv alg}
    \phom_C(A,B) \cong \bcoalg(C, \ubalg(A,B)) \cong \balg(A ,  [C,B])
\end{align}
so that we see that $\phom_C(-,B)$ is represented by $[C,B]$. We can even find a representation for $\phom_C(-,B)$, but we will leave this for the more general setting (\cref{thm: copower}). The interested reader is encouraged to calculate that other representation in this example.

In practice, when we want to compute $\ubalg(A,B)$, we will compute $[C,B]$ and then apply the universal property above. We do that now, computing some of the results of \cref{ex:universal measuring for preinitial} without appealing to \cite[Lemma 37]{NP23}.

\begin{example}
    \label{ex: compute partial alg hom}
We compute $\ubalg(\standalg{n},B)$ using the right-hand bijection in \cref{eq:univ prop of conv alg}.

We first observe the following for any coalgebra $Z$.
\[ \balg(\standalg{n} , Z) \cong 
\begin{cases} 
    * & \text{if } n_{Z} = (n+1)_{Z} \\ \emptyset & \text{otherwise}
\end{cases}
\] 
Since we are considering $Z:= [C,B]$, we need to understand when $n_{[C,B]} = (n+1)_{[C,B]}$. By definition, $0_{[C,A]}$ is the constant function at $0_A$. Then $1_{[C,A]}$ is the function that takes every $c \in C$ of index $0$ to $0_B$, and every other $c \in C$ to $1_B$. Inductively, we can show that $n_{[C,B]}(c) = \min(\llbracket c \rrbracket , n)_{B}$.
Thus, $n_{[C,B]} = (n+1)_{[C,B]}$ means that $\min(\llbracket c \rrbracket , n)_{B} = \min(\llbracket c \rrbracket , n + 1)_{B}$ for all $c \in C$, and this holds if and only if $\llbracket c \rrbracket \leq n$ for all $c \in C$ or $n_B = (n+1)_B$. Now we have the following.
\begin{align}\label{eq: conv alg calc}
    \bcoalg(C ,  \ubalg(\standalg{n},B)) \cong \balg(\standalg{n} , [C,B]) \cong
\begin{cases} 
    * & \text{if } \llbracket c \rrbracket \leq n \text{ for all } c \in C \\ 
    * & \text{if } n_B = (n+1)_B \\
    \emptyset & \text{otherwise} 
\end{cases}
\end{align}

In the case that $n_B = (n+1)_B$, we find that $\ubalg(\standalg{n},B)$ has the universal property of the terminal object, $\eN$.

Now suppose that $n_B \neq (n+1)_B$. Since $\bcoalg(C, \standcoalg{n}) = *$ if and only if $\llbracket c \rrbracket \leq n$ for all $c \in C$, $\ubalg(\standalg{n},B)$ has the universal property of $\standcoalg{n}$. 

Now we have calculated the following 
\[ \ubalg(\standalg{n},B) = \begin{cases}
    \eN \text{ if } n_B = (n+1)_B \\
    \standcoalg{n} \text{ otherwise}
\end{cases} \]
This aligns with our expectations, since there is a total homomorphism $\standalg{n} \to B$ if $n_B = (n+1)_B$ but there is only an $n$-partial homomorphism $\standalg{n} \to B$ otherwise.

Finally, note that taking $B := \N$, we have calculated $\ubalg(\standalg{n},\N)$, the \emph{dual} (\cref{ex: dual}) of $\standalg{n}$, to be $\standcoalg{n}$.
\end{example}

\subsection{Generalizing initial objects}\label{ssec: Generalizing initial objects}

Now we turn to the question of specifying algebras other than $\N$ via a generalization of the notion of initial algebra.

The fact that $\N$ is the initial object in $\balg$ means that the algebra structure on an algebra $A$ specifies exactly one total algebra homomorphism $\N \to A$, and this can be constructed inductively. Now we have introduced the notion of partial homomorphism which can be constructed by partial induction (\cref{con: partial induction}). Thus, we might ask if we can formalize a notion of being initial with respect to partial homomorphisms and partial induction.

Our calculations in this section so far have perhaps given us the intuition that the algebra $\standalg{n}$ represents $n$-partial homomorphisms in the way that $\N$ represents total homomorphisms. Indeed, from \cref{eq: conv alg calc}, there is a unique measuring $f: \standcoalg{n} \to \standalg{n} \to B$ for any algebra $B$. Now we try to capture and elucidate this fact by rephrasing it to say that $\standalg{n}$ is a certain kind of initial object with respect to such partial homomorphisms.

There are multiple equivalent definitions of initial object, and we choose the one that is amenable to generalization. We choose to define an initial object in a category $\C$ as an object $I \in \C$ such that there is a unique function $1 \to \C(I,X)$ for all $X \in \C$. Now we have brought to the surface a parameter, here $1$, that we can vary, inspired by the theory of weighted limits.

\begin{definition}
    [$C$-initial algebra, cf. \cref{def: second initial algebra}]
    \label{def: first initial algebra}
For a coalgebra $C$, we define a \emph{$C$-initial algebra} to be an algebra $A$ such that there is a unique coalgebra morphism $C \to \ubalg(A,X)$ for all algebras $X$.
\end{definition}

\begin{remark}
    One may wonder what would happen if for a \emph{set} $S$, we defined an $S$-initial algebra to be an algebra $A$ such that there is a unique \emph{function} $S \to \balg(A,X)$ for all $X \in \balg$. But every algebra is an $\emptyset$-initial algebra, and an $S$-initial algebra is an initial algebra for any $S \neq \emptyset$ (because functions $S \to T$ are unique only when $S = \emptyset$ or $T \cong 1$). Thus, we need to consider $\ubalg(A,X)$ and not just $\balg(A,X)$ to obtain interesting $C$-initial algebras.
\end{remark}

\begin{example}
    We have shown in \cref{ex: compute partial alg hom} that $\standalg{n}$ is an $\standcoalg{n}$-initial algebra.

    Since $\balg(A,X) \cong \bcoalg(\I, \ubalg(A,X))$, the initial algebra $\N$ is the (only) $\I$-initial algebra. In fact, since $\ubalg(\N,X) = \eN$ for all $X$ by \cite[Lemma 37]{NP23} or by a similar computation to \cref{ex: compute partial alg hom}, we find that $\N$ is a $C$-initial algebra for any subterminal coalgebra (i.e., $\emptyset, \standcoalg{n}, \coaN, \eN$).
\end{example}

Now we see that for instance, both $\standalg{n}$ and $\N$ are $\standcoalg{n}$-initial algebras. Thus, $\standcoalg{n}$-initial algebras are not determined up to isomorphism as initial algebras are. This captures the fact that for an algebra $B$, we can construct $n$-partial homomorphisms from both $\standalg{n}$ and $\N$ to $B$.

\begin{definition}
    [Terminal $C$-initial algebra, cf. \cref{def: second initial algebra}]
    \label{def: first terminal c-initial algebra}
    Consider the category whose objects are $C$-initial algebras, and whose morphisms $A \to B$ are total algebra homomorphisms $A \to B$. Then we call the terminal object of this category the \emph{terminal $C$-initial algebra}.
\end{definition}

\begin{example}
    Since the only $\I$-initial algebra is $\N$, it is also the terminal $C$-initial algebra.
\end{example}

We want to show that $\standalg{n}$ is the terminal $\standcoalg{n}$-initial algebra. However, we need another computational tool. This is in fact an alternate generalization of the notion of initial algebra.

Above, we might have observed that an initial object can be characterized as the limit of the identity functor and then, following the theory of weighted limits, considered objects $\lim^C\id_\ubalg$ with the following universal property.
\[ \balg(A,\lim^C\id_\ubalg) \cong \lim_{X \in \balg} \ \bcoalg (C, \ubalg(A,X))  \]
We can immediately calculate (\cref{prop:map from initial alg to dual}) that $\lim^C\id_\ubalg$ is $C^* := [C, \N]$,
the dual of $C$ (\cref{ex: dual}). By the bijection above, there is a unique total algebra homomorphism from each $C$-initial object to $\lim^C\id_\ubalg$. This will help us understand the possible structure that a $C$-initial object can have. But first, we must understand the structure of $C^*$.

\begin{example}
    Let $C:= \standcoalg{n}$. Then elements of $[\standcoalg{n}, \N]$ are sequences of $n+1$ natural numbers.
    
    The successor of a sequence $(a_i)_{i=0}^n$ is $(b_i)_{i=0}^n$ where $b_0 = 0$ and $b_{i+1} = a_i +1$. Notice that the successor of $(b_i)_{i=0}^n$ is $(c_i)_{i=0}^n$ where $c_0 = 0$, $c_1 = 1$ and otherwise $c_{i+2} = a_i +2$. Thus, we can inductively show that the $(n+1)$-st successor of any element of $[\standcoalg{n}, \N]$ is the sequence $(i)_{i=0}^n$, and the successor of this sequence is itself. 

    We claim that the unique morphism $!_{[\standcoalg{n}, \N]} : \N \to [\standcoalg{n}, \N]$ factors through $\standalg{n}$. We have $m_{[\standcoalg{n}, \N]} = (\min( i,m))_{i=0}^n$. Thus, the restriction of the map $!_{[\standcoalg{n}, \N]}$ to $\{0,...,n\} \subset \N$ is injective, and $n_{[\standcoalg{n}, \N]} = m_{[\standcoalg{n}, \N]}$ for all $m \geq n$.
\end{example}

\begin{example}\label{example: terminal C-initial algebra}
    Now we can show that $\standalg{n}$ is the terminal $\standcoalg{n}$-initial algebra. In this calculation, we use of the law of excluded middle for the only time in this paper.
    
    Consider an $\standcoalg{n}$-initial algebra $A$.

    First, we show that every $a \in A$ is either the basepoint or a successor. So suppose that there is an element $a \in A$ that is not a basepoint or successor, and consider an algebra $B$ with more than one element. Then for any $b \in B$ and any measure $f: \standcoalg{n} \to A \to B$, we can form a measure $\tilde f: \standcoalg{n} \to A \to B$ such that $\tilde f_n(a) = b$ and $\tilde f$ agrees with $f$ everywhere else, since \cref{definition: partial homomorphism} imposes no requirements on $\tilde f_n(a)$. Thus, there are multiple measures $\standcoalg{n} \to A \to B$, equivalently total algebra homomorphisms $\standcoalg{n} \to \ubalg(A, B)$, so we find a contradiction.

    Now, we consider the unique map $A \to [\standcoalg{n},\N]$ and claim that this factors through the injection $\standalg{n} \to [\standcoalg{n},\N]$, so that there is a unique $A \to \standalg{n}$. Since every element of $A$ is either a basepoint or a successor, every element of $A$ is either of the form $n_A$ or has infinitely many predecessors. The elements of the form $n_A$ are mapped those to of the form $n_{[\standcoalg{n},\N]}$, and the elements who have infinitely many predecessors can only be mapped to the `top element' $n_{[\standcoalg{n},\N]} = (i)_{i=0}^n$, since this is the only element which has an $m$-th predecessor for any $m \in \N$. Thus, the unique $A \to [\standcoalg{n},\N]$ indeed factors through $\standalg{n}$.
\end{example}

Thus we have shown how to specify algebras of the form $\standalg{n}$ in a way analogous to the specification of $\N$ as an initial algebra. After determining an algebra structure on a set $A$, we obtain a unique $n$-partial algebra homomorphism $\standalg{n} \to A$.

\section{General theory}\label{sec:umeas}

In this section, we now generalize the results of the previous section. So fix a symmetric monoidal category $(\C, \otimes, \I)$ and a lax symmetric monoidal endofunctor $(F, \nabla, \eta)$ (defined below in \cref{definition: lax symmetric monoidal}) on $\C$.

\subsection{Measuring coalgebras}

In this section, we define the general notion of \emph{measuring} for $F$. Note that in \cref{ssec:partial homomorphisms} above, it was convenient to define a measuring to be a certain kind of function $C \to A \to B$, but here we first define the notion of measuring without requiring the monoidal structure to be closed. That is, we define a measuring to be a certain kind of function $C \otimes A \to B$.

\begin{definition}\label{definition: lax symmetric monoidal}
 That $(F, \nabla, \eta)$ is a \emph{lax symmetric monoidal endofunctor} means that $F$ is an endofunctor on $\C$ with
 \begin{description}
     \item[\emph{(L1)}] a natural transformation $\nabla_{X,Y}\colon F(X)\otimes F(Y)\longrightarrow F(X\otimes Y)$, for all $X,Y\in \C$; and
     \item[\emph{(L2)}] a morphism $\eta: \I\rightarrow F(\I)$ in $\C$;
 \end{description}
 such that $(F, \nabla, \eta)$ is associative, unital and commutative, as described in \cite[Appendix A.2]{NP23}.
\end{definition}

\begin{example}
    In \cref{section: guided examples}, we considered the (cartesian closed) symmetric monoidal category $(\Set, \times, 1)$. For the endofunctor $\id + 1$, we define $\nabla_{X,Y}: (X + 1) \times (Y + 1) \to (X \times Y) + 1$ to take $(x,y) \mapsto (x,y)$, $(\tone, y) \mapsto \tone$, $(x,\tone) \mapsto \tone$, $(\tone,\tone) \mapsto \tone$ for $x \in X, y \in Y, \tone \in 1$. We define $\eta : 1 \to 1 + 1$ to be the inclusion into the first summand.
\end{example}

\begin{definition}
    [Measuring, cf. \cref{definition: partial homomorphism}]
    \label{definition: measuring coalgebra}
    Consider algebras $(A,\alpha)$ and $(B, \beta)$, and a coalgebra $(C, \chi)$. 
   We call a map $\phi:C\otimes A\rightarrow B$ a \emph{measuring from $A$ to $B$} if it makes the following diagram commute.
    \[
     \begin{tikzcd}[row sep=0]
       & F(C)\otimes F(A) \ar{r}{\nabla_{C,A}} & F(C\otimes A) \ar{r}{F(\phi)} & F(B) \ar{dd}{\beta}\\
      C\otimes F(A) \ar{ur}{\chi\otimes \id} \ar{dr}[swap]{\id\otimes \alpha} \\
      & C\otimes A \ar{rr}{\phi} & & B
     \end{tikzcd}
    \]
    We denote by $\mu_C(A,B)$ the set of all measurings $C\otimes A\rightarrow B$.
\end{definition}

If $\phi\colon C\otimes A \rightarrow B$ is a measuring, $a\colon (A', \alpha') \rightarrow (A, \alpha)$ and $b\colon (B, \beta) \rightarrow (B', \beta')$ are algebra homomorphisms, and $c\colon (C', \chi')\rightarrow (C, \chi)$ is a coalgebra homomorphism, then one can check that the composite
\[
\begin{tikzcd}
   C'\otimes A' \ar{r}{c\otimes a} & C\otimes A \ar{r}{\phi} & B\ar{r}{b} & B'
\end{tikzcd}
\]
is a measuring. Therefore, the assignment $C,A,B \mapsto \mu_C(A,B)$ underlies a functor
\[
\mu \colon \bcoalg^\op \times \balg^\op \times \balg \longrightarrow \Set. 
\]
We shall see that this functor is representable in each of its variables under reasonable hypotheses.

\begin{example}
    The monoidal unit $\I$ of $\C$ is a coalgebra via the lax symmetric monoidal structure $\eta:\I\rightarrow F(\I)$. Thus morphisms $A \to B$ in $\C$ are in bijection with morphisms $\I \otimes A \to B$ in $\C$, and one can check that a morphism $A \to B$ in $\C$ is an algebra homomorphism if and only if $\I \otimes A \to B$ is a measure. Thus, $\mu_\I( A, B)\cong\balg(A,B)$.
\end{example}

\begin{definition}[Universal measuring, cf. \cref{def: first universal measuring}]
    \label{definition: universal measuring}
    Let $A$ and $B$ be algebras.

    We define the category of measurings from $A$ to $B$ to be the category whose objects are pairs $(C;f)$ of a coalgebra $C$ and a measuring $f: C \otimes A \to B$, and whose morphisms $(C;f) \to (D;g)$ are coalgebra homomorphisms $d : C \to D$ such that $f = g (d \otimes A)$.

    The \emph{universal measuring from $A$ to $B$}, denoted $(\ubalg(A,B), \ev)$, is the terminal object (if it exists) in the category of measurings from $A$ to $B$. That is, if $(C;f)$ is a measuring from $A$ to $B$, then there is a unique morphism $!: C \to \ubalg(A,B)$ that makes the following diagram commute.

    \[
\begin{tikzcd}
    C\otimes A \ar{r}{f} \ar[dashed]{d}[swap]{! \otimes A} & B\\
    {\ubalg(A,B)\otimes A}\ar{ur}[swap]{\ev} 
\end{tikzcd}
\]

\end{definition}

If a universal measuring $(\ubalg(A,B), \ev)$ exists, then we obtain a representation $\ubalg(A,B)$ for $\mu_-(A,B) : \bcoalg^\op \to \Set$. That is, we have the following bijection, natural in $C,A,B$.
\[
\mu_C(A,B)\cong \bcoalg(C, \ubalg(A,B)).
\]

In the following sections, we will show that if $\C$ is closed and locally presentable and $F$ is accessible, then the universal measuring always exists.

\subsection{Local presentability, accessibility, and the measuring tensor}

We will now usually require that $\C$ be \emph{locally presentable} and $F$ is \emph{accessible} \cite[Def.~1.17~\&~2.17]{presentable}. Then $\balg$ and $\bcoalg$ are also locally presentable, the forgetful functor $\balg \to \C$ has a left adjoint $\free$, and the forgetful functor $\bcoalg \to \C$ has a right adjoint $\cofree$ \cite[Cor.~2.75~\&~Ex.~2.j]{presentable}. We will also use that these categories, as locally presentable categories, are complete and cocomplete.

\begin{example}
    $\Set$ is locally presentable and $\id + 1$ is accessible.
\end{example}

If $\C$ is locally presentable and $F$ is accessible, then for a coalgebra $(C,\chi)$, and algebras $(A,\alpha)$ and $(B,\beta)$, a map $\phi\colon C\otimes A\rightarrow B$ uniquely determines an algebra homomorphism $\phi'\colon\free(C\otimes A)\rightarrow (B,\beta)$.
Notice then that a map $\phi\colon C\otimes A\rightarrow B$ is a measuring if and only if both composites from $\free(C\otimes FA)$ to $(B, \beta)$ coincide in the following diagram. 
\[
\begin{tikzcd}
   \free(C\otimes FA) \ar[shift left, "\free(\id_C \otimes \alpha)"]{rr} \ar[shift right,"{f}"']{rr} && \free(C\otimes A) \ar{r}{\phi'} & (B, \beta)
\end{tikzcd}
\]
In the above, $f$ is obtained as adjunct under the free-forgetful adjunction of the composition
\[
\begin{tikzcd}
   C\otimes FA \ar{r}{\chi\otimes \id}& FC\otimes FA \ar{r}{\nabla_{C,A}} & F(C\otimes A) \ar{r}{F(i)} & F(\free(C\otimes A)) \ar{r}{\alpha_\free} & \free(C\otimes A),
\end{tikzcd}
\]
in which $i$ is the unit of the free-forgetful adjunction and $\alpha_\free$ is the algebra structure on the free algebra $\free(C\otimes A)$. We have now shown the following.

\begin{theorem}
    \label{thm: copower}
Suppose that $\C$ is locally presentable and $F$ is accessible.
Consider a coalgebra $C$ and an algebra $A$.
Then the coequalizer of the following diagram in $\balg$ exists, and we denote it by $C\triangleright A$ and call it the \emph{measuring tensor of $C$ and $A$.}
\[
\begin{tikzcd}
   \free(C\otimes FA) \ar[shift left]{r} \ar[shift right]{r} & \free(C\otimes A) \ar[dashed]{r}{\mathsf{coeq}} & C\triangleright A.
\end{tikzcd}
\]
Given any algebra $B$, a measuring $\phi:C\otimes A\rightarrow B$ uniquely corresponds to an algebra homomorphism $C\triangleright A\rightarrow B$. In other words, we obtain a natural identification
\[
\mu_C(A,B)\cong \balg(C\triangleright A, B).
\]
That is, the functor $\mu_C(A,-) : \balg \to \Set$ is represented by $C \triangleright A$.
\end{theorem}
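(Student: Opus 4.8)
The plan is to assemble the statement from three ingredients: existence of the coequalizer, the universal property of the coequalizer transported across the free--forgetful adjunction $\free \dashv U$ (write $U \colon \balg \to \C$ for the forgetful functor), and an adjunction chase matching the coequalizing condition with the definition of a measuring. Existence is immediate: since $\C$ is locally presentable and $F$ is accessible, $\balg$ is locally presentable, hence cocomplete, so the coequalizer of the displayed parallel pair exists; I define $C \triangleright A$ to be it, with quotient map $\mathsf{coeq}$.

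The core is the natural bijection. By the universal property of the coequalizer, $\balg(C\triangleright A, B)$ is naturally identified with the set of algebra homomorphisms $g \colon \free(C\otimes A) \to B$ satisfying $g \circ \free(\id_C \otimes \alpha) = g \circ f$. By the adjunction $\free \dashv U$, such $g$ correspond bijectively to morphisms $\phi \colon C \otimes A \to B$ in $\C$, with $g = \phi'$ the adjunct of $\phi$. Everything thus reduces to showing that $\phi'$ coequalizes the pair precisely when $\phi \in \mu_C(A,B)$---the equivalence asserted just before the theorem, which I would verify as follows.

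Since the adjunction bijection $\balg(\free(C\otimes FA), B) \cong \C(C\otimes FA, UB)$ is injective, two maps $\free(C\otimes FA) \to B$ agree iff their adjuncts do. By naturality of the adjunction in its source, the adjunct of $\phi' \circ \free(\id_C \otimes \alpha)$ is $\phi \circ (\id_C \otimes \alpha)$. As $f$ is the adjunct of $g := \alpha_\free \circ F(i) \circ \nabla_{C,A} \circ (\chi \otimes \id)$, the adjunct of $\phi' \circ f$ is $U(\phi') \circ g$; here I would use that $\phi'$ is an algebra homomorphism to rewrite $U(\phi') \circ \alpha_\free = \beta \circ F(U(\phi'))$, and then $F(U(\phi')) \circ F(i) = F(U(\phi') \circ i) = F(\phi)$ since $U(\phi') \circ i = \phi$, turning this adjunct into $\beta \circ F(\phi) \circ \nabla_{C,A} \circ (\chi \otimes \id)$. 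Equating the two adjuncts yields exactly the commuting square of \cref{definition: measuring coalgebra}, so $\phi'$ coequalizes the pair iff $\phi$ is a measuring.

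Combining these gives $\balg(C\triangleright A, B) \cong \mu_C(A,B)$, with naturality in $B$ inherited from the coequalizer identification and the adjunction bijection (naturality of $\mu$ in $C$ and $A$ was recorded when $\mu$ was defined as a functor). I expect the only real obstacle to be the bookkeeping in the adjunction chase of the third paragraph---matching each composite to its correct adjunct and invoking the algebra-homomorphism square for $\phi'$ in the right place; the rest is formal.
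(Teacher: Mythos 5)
Your proposal is correct and follows essentially the same route as the paper: the paper establishes the theorem via the paragraph preceding it, identifying measurings $\phi$ with algebra homomorphisms $\phi'$ out of $\free(C\otimes A)$ that coequalize the pair $\free(\id_C\otimes\alpha)$ and $f$, and then invoking cocompleteness of the locally presentable category $\balg$. Your adjunction chase in the third paragraph is exactly the verification the paper leaves implicit in its ``Notice then that...'' sentence, and it is carried out correctly.
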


In the following sections, we will also construct representing objects for $\mu_C(-,B)$ and $\mu_-(A,B)$.

\subsection{Measurings as partial homomorphisms}

Now we will often assume that the symmetric monoidal structure of $\C$ is closed. Whenever we do, we will denote the internal hom by $\inthom{\C}{-}{-}$. In this section, we provide a dual description of measurings when $\C$ is closed, generalizing \cref{definition: partial homomorphism}.

Note that since $F$ is lax monoidal, it is also \textit{lax closed}: that is, there is a map
\[
\widetilde{\nabla}_{X,Y}:F\left( \inthom{\C}{X}{Y}\right) \stackrel{}\longrightarrow \inthom{\C}{FX}{FY}
\]
natural in  $X,Y \in \C$.
Indeed, this is the adjunct under the adjunction $- \otimes FX \dashv \inthom{\C}{FX}{-}$ of the composition
\[
\begin{tikzcd}
F(\inthom{\C}{X}{Y})\otimes F(X) \ar{r}{\nabla_{\inthom{\C}{X}{Y}, X}}  & [2em] F(\inthom{\C}{X}{Y}\otimes X) \xrightarrow{F(\ev_{X})} F(Y),
\end{tikzcd}
\]
in which $\ev_{X}$ is the counit of the adjunction $- \otimes X \dashv \inthom{\C}{X}{-}$.

Given a closed monoidal structure, we can connect the notion of measuring with our notion of partial homomorphism from \cref{section: guided examples}.
\begin{proposition}
    [cf. \cref{definition: partial homomorphism}]
    \label{prop: measuring for closed}
    Suppose that $\C$ is closed.
    Given algebras $(A, \alpha)$ and $(B,\beta)$ and a coalgebra $(C,\chi)$, a map $\phi: C\otimes A\rightarrow B$ is a measuring if and only if its adjunct $\widetilde{\phi}:C\rightarrow \inthom{\C}{A}{B}$ fits in the following commutative diagram
    \[
        \begin{tikzcd}[row sep=0]
          & F(C) \ar{r}{F(\widetilde{\phi})} & F(\inthom{\C}{A}{B}) \ar{r}{\widetilde{\nabla}_{A,B}} & \inthom{\C}{FA}{FB} \ar{dd}{\beta_*}\\
         C \ar{ur}{\chi} \ar{dr}[swap]{\widetilde{\phi}} \\
         & \inthom{\C}{A}{B} \ar{rr}{\alpha^*} & & \inthom{\C}{FA}{B}
        \end{tikzcd}
       \]
    where $\alpha^*$ denotes precomposition by $\alpha$ and $\beta_*$ denotes postcomposition by $\beta$. We shall also refer to the pair $(C; \widetilde{\phi})$ as a \emph{measuring}.
\end{proposition}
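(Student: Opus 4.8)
The plan is to observe that each of the two squares asserts the equality of a \emph{parallel pair} of morphisms, and that the two pairs correspond to one another under the tensor--hom adjunction $-\otimes F(A)\dashv \inthom{\C}{F(A)}{-}$. Concretely, the outer square of \cref{definition: measuring coalgebra} equates the two routes $C\otimes F(A)\to B$ given by $\beta\circ F(\phi)\circ\nabla_{C,A}\circ(\chi\otimes\id)$ and $\phi\circ(\id\otimes\alpha)$, while the square for $\widetilde{\phi}$ equates the two routes $C\to\inthom{\C}{F(A)}{B}$ given by $\beta_{*}\circ\widetilde{\nabla}_{A,B}\circ F(\widetilde{\phi})\circ\chi$ and $\alpha^{*}\circ\widetilde{\phi}$. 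Since transposition across this adjunction is a bijection on hom-sets, one square commutes if and only if the other does, provided I show that transposing each route of the $\widetilde{\phi}$-square (by tensoring with $F(A)$ and postcomposing with $\ev_{F(A)}$) returns the corresponding route of the measuring square. The proposition then follows at once.

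For the bottom routes I would record two facts: the defining relation $\ev_{A}\circ(\widetilde{\phi}\otimes\id_{A})=\phi$ of the adjunct $\widetilde{\phi}$, and the relation $\ev_{F(A)}\circ(\alpha^{*}\otimes\id)=\ev_{A}\circ(\id\otimes\alpha)$, which is just naturality of evaluation for $\alpha^{*}=\inthom{\C}{\alpha}{B}$. Combining these with the interchange law of $\otimes$ gives $\ev_{F(A)}\circ((\alpha^{*}\circ\widetilde{\phi})\otimes\id)=\phi\circ(\id\otimes\alpha)$, i.e.\ the bottom route $\alpha^{*}\circ\widetilde{\phi}$ transposes exactly to the bottom route $\phi\circ(\id\otimes\alpha)$.

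The top routes carry the real content, and here I would simplify the transpose from the outside in. First, $\ev_{F(A)}\circ(\beta_{*}\otimes\id)=\beta\circ\ev_{F(A)}$, again by naturality of evaluation, now for $\beta_{*}=\inthom{\C}{F(A)}{\beta}$. Next, the very definition of $\widetilde{\nabla}_{A,B}$ as the adjunct of $F(\ev_{A})\circ\nabla_{\inthom{\C}{A}{B},A}$ yields $\ev_{F(A)}\circ(\widetilde{\nabla}_{A,B}\otimes\id)=F(\ev_{A})\circ\nabla_{\inthom{\C}{A}{B},A}$. I would then invoke naturality of $\nabla$ in its first variable along $\widetilde{\phi}$, namely $\nabla_{\inthom{\C}{A}{B},A}\circ(F(\widetilde{\phi})\otimes\id)=F(\widetilde{\phi}\otimes\id)\circ\nabla_{C,A}$, and finally collapse $F(\ev_{A})\circ F(\widetilde{\phi}\otimes\id)=F(\ev_{A}\circ(\widetilde{\phi}\otimes\id))=F(\phi)$ using functoriality of $F$ and the relation $\ev_{A}\circ(\widetilde{\phi}\otimes\id)=\phi$. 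Assembling these and precomposing with $\chi\otimes\id$ gives $\beta\circ F(\phi)\circ\nabla_{C,A}\circ(\chi\otimes\id)$, the top route of the measuring square.

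I expect the chief obstacle to be the bookkeeping of the top route: one must unwind the lax-closed comparison $\widetilde{\nabla}_{A,B}$ and apply naturality of $\nabla$ in precisely the stated form, being careful that $F(\id_{A})=\id_{F(A)}$ so that the naturality square of $\nabla$ is the one for the pair $(\widetilde{\phi},\id_{A})$. Once the two transpositions are established, bijectivity of the adjunction closes the argument with no further case analysis.
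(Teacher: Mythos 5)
Your proof is correct, and it is exactly the routine adjunction-transposition argument that the paper leaves implicit: the proposition is stated without proof, the two legs of each square being adjunct to one another under $-\otimes F(A)\dashv \inthom{\C}{F(A)}{-}$. All four of your ingredients (the two naturality-of-evaluation identities, the defining transpose of $\widetilde{\nabla}_{A,B}$, and naturality of $\nabla$ in the pair $(\widetilde{\phi},\id_A)$) are used correctly, so nothing is missing.
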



\begin{example}
Note that the cartesian monoidal structure on $\Set$ is closed, and that the above recovers \cref{definition: partial homomorphism}.
\end{example}

This approach allows us to reformulate the notion of measuring as certain coalgebra homomorphisms which we now describe. If $\C$ is locally presentable and $F$ is accessible, then given a coalgebra $(C,\chi)$ and algebras $(A,\alpha)$ and $(B, \beta)$, a map $\phi: C\rightarrow \inthom{\C}{A}{B}$ in $\C$ uniquely determines a coalgebra homomorphism $\phi': (C, \chi_C) \rightarrow \cofree(\inthom{\C}{A}{B})$. A map $\phi: C\rightarrow \inthom{\C}{A}{B}$ is a measuring if and only if both composites from $(C, \chi_C)$ to $\cofree\big( \inthom{\C}{FA}{B}\big)$ in the following diagram coincide.
\[
\begin{tikzcd}
    (C, \chi_C) \ar{r}{\phi'} & \cofree\big( \inthom{\C}{A}{B}\big) \ar[shift left,"\cofree(\inthom{\C}{\alpha}{B})"]{rr} \ar[shift right,"f"']{rr} && \cofree\big( \inthom{\C}{FA}{B}\big)
\end{tikzcd}
\]
In the above, $f$ is the adjunct under the cofree-forgetful adjunction of the following composite.
\[
\begin{tikzpicture}[baseline= (a).base]
\node[scale=0.85] (a) at (1,1){
\begin{tikzcd}
   \cofree(\inthom{\C}{A}{B}) \ar{r}{\chi_\cofree} & F \big(  \cofree(\inthom{\C}{A}{B}) \big) \ar{r}{F(\varepsilon)} & F \big( \inthom{\C}{A}{B}\big) \ar{r}{\widetilde{\nabla}_{A,B}} & \inthom{\C}{F(A)}{F(B)} \ar{r}{\beta_*} & \inthom{\C}{F(A)}{B}.
\end{tikzcd}
};  
\end{tikzpicture}
\]
Here $\chi_\cofree$ is the coalgebraic structure on the cofree coalgebra, and $\varepsilon$ is the counit of the cofree-forgetful adjunction.

Now we can use this to guarantee the existence of a universal measuring.

\begin{theorem}
    [{Proof in \cite[Appendix A.3]{NP23}}]
    \label{theorem: equalizer formula for universal measuring}
Suppose that $\C$ is locally presentable and closed and that $F$ is accessible.
Given algebras $A$ and $B$, then the universal measuring coalgebra $\ubalg(A,B)$ exists and is obtained as the following equalizer diagram in $\bcoalg$
\[
\begin{tikzcd}
   \ubalg(A,B) \ar[dashed]{r}{\mathsf{eq}} & \cofree\big( \inthom{\C}{A}{B}\big) \ar[shift left]{r} \ar[shift right]{r} & \cofree\big( \inthom{\C}{F(A)}{B}\big),
\end{tikzcd}
\]
with $\widetilde{\mathsf{ev}}\colon \ubalg(A,B)\rightarrow \inthom{\C}{A}{B}$ obtained as the composition of the equalizer map $\mathsf{eq}$ together with the counit $\cofree\big( \inthom{\C}{A}{B}\big)\rightarrow \inthom{\C}{A}{B}$ of the cofree-forgetful adjunction.
\end{theorem}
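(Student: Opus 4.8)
The plan is to leverage the reformulation established immediately before the statement: a map $\phi\colon C\to\inthom{\C}{A}{B}$ is a measuring precisely when its associated coalgebra homomorphism $\phi'\colon (C,\chi_C)\to\cofree(\inthom{\C}{A}{B})$ equalizes the parallel pair
\[
\cofree(\inthom{\C}{A}{B}) \rightrightarrows \cofree(\inthom{\C}{F(A)}{B}).
\]
Since the universal measuring is by definition the \emph{terminal} measuring, and an equalizer is precisely the universal object through which all equalizing maps factor, I expect these two universal properties to coincide once the dictionary between measurings and equalizing coalgebra homomorphisms is in place. The whole proof is then an exercise in transporting the universal property of the equalizer across that dictionary.

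First I would invoke completeness of $\bcoalg$. Because $\C$ is locally presentable and $F$ is accessible, $\bcoalg$ is locally presentable, hence complete, so the equalizer of the displayed parallel pair exists; I name its equalizer map $\mathsf{eq}\colon\ubalg(A,B)\to\cofree(\inthom{\C}{A}{B})$. Here I would observe that both arrows of the pair genuinely live in $\bcoalg$: the map $\cofree(\inthom{\C}{\alpha}{B})$ is the image of a $\C$-morphism under the functor $\cofree$, and the second arrow $f$ was defined as the adjunct under the cofree-forgetful adjunction of the explicit composite, so it too is a coalgebra homomorphism. Consequently the equalizer is formed in $\bcoalg$ and $\mathsf{eq}$ is a coalgebra homomorphism. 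I then define $\widetilde{\ev}$ as the composite of $\mathsf{eq}$ with the counit $\varepsilon\colon\cofree(\inthom{\C}{A}{B})\to\inthom{\C}{A}{B}$; because $\mathsf{eq}$ equalizes the pair, the reformulation says exactly that $\widetilde{\ev}$ is a measuring by $\ubalg(A,B)$, with $\mathsf{eq}$ its associated coalgebra homomorphism. Thus $(\ubalg(A,B);\widetilde{\ev})$ is an object of the category of measurings from $A$ to $B$.

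Next I would verify the terminal property. Given any measuring $(C;\widetilde{\phi})$, the reformulation supplies a coalgebra homomorphism $\phi'\colon C\to\cofree(\inthom{\C}{A}{B})$ equalizing the pair, so the universal property of the equalizer yields a unique coalgebra homomorphism $!\colon C\to\ubalg(A,B)$ with $\mathsf{eq}\circ\,!=\phi'$. Post-composing with $\varepsilon$ and using $\widetilde{\ev}=\varepsilon\circ\mathsf{eq}$ together with $\widetilde{\phi}=\varepsilon\circ\phi'$ gives $\widetilde{\ev}\circ\,!=\widetilde{\phi}$, which under the adjunction $-\otimes A\dashv\inthom{\C}{A}{-}$ is exactly the condition $\phi=\ev\circ(!\otimes A)$ defining a morphism of measurings in \cref{definition: universal measuring}. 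Uniqueness of $!$ as a coalgebra homomorphism, combined with the bijectivity of the correspondence $\widetilde{\phi}\leftrightarrow\phi'$, forces uniqueness of the morphism of measurings, so $(\ubalg(A,B);\widetilde{\ev})$ is terminal, i.e.\ it is the universal measuring of \cref{definition: universal measuring}.

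The main obstacle is bookkeeping rather than a deep idea: one must track how the two adjunctions --- the cofree-forgetful adjunction on coalgebras and $-\otimes A\dashv\inthom{\C}{A}{-}$ on $\C$ --- interact, and confirm that the passage from a measuring $\phi\colon C\otimes A\to B$, to its adjunct $\widetilde{\phi}\colon C\to\inthom{\C}{A}{B}$, to the equalizing coalgebra homomorphism $\phi'$ is natural in $C$ and compatible with the equalizer factorization. This naturality is what upgrades the pointwise terminal property to the representing bijection $\mu_C(A,B)\cong\bcoalg(C,\ubalg(A,B))$. Since the well-definedness of $f$ as a coalgebra homomorphism and the validity of the reformulation itself are furnished by the discussion preceding the statement, the remaining task is purely to assemble these pieces into the equalizer argument above.
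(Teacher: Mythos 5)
Your proposal is correct and follows essentially the same route as the paper's proof: both use the reformulation of measurings as coalgebra homomorphisms equalizing the parallel pair into $\cofree\big(\inthom{\C}{F(A)}{B}\big)$ and then transport the universal property of the equalizer (which exists since $\bcoalg$ is locally presentable, hence complete) into the terminality of the universal measuring. Your write-up is somewhat more explicit than the paper's about the uniqueness bookkeeping and the interaction of the two adjunctions, but there is no substantive difference in the argument.
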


\begin{corollary}
Suppose that $\C$ is locally presentable and closed and that $F$ is accessible.
Given algebras $A$ and $B$, the functor $\mu_-(A,B) : \bcoalg^\op \to \Set$ is represented by $\ubalg(A,B)$.
\end{corollary}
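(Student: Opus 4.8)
The plan is to derive the corollary formally from the existence of the universal measuring together with its defining universal property; no new computation is required. The one substantive input---that $\ubalg(A,B)$ exists under these hypotheses---is supplied by \cref{theorem: equalizer formula for universal measuring}, which exhibits $\ubalg(A,B)$ as an equalizer in $\bcoalg$ and produces the evaluation measuring $\ev \colon \ubalg(A,B)\otimes A \to B$. Everything beyond that is a Yoneda-style observation that translates ``terminal object in the category of measurings'' into ``representing object.''

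First I would note that $\ev$ is an element of $\mu_{\ubalg(A,B)}(A,B)$, and hence, by the functoriality of $\mu$ in its coalgebra variable (established immediately after \cref{definition: measuring coalgebra}), it induces a natural transformation
\[
\Phi \colon \bcoalg(-,\ubalg(A,B)) \Longrightarrow \mu_-(A,B)
\]
whose component at a coalgebra $C$ sends a coalgebra homomorphism $g\colon C \to \ubalg(A,B)$ to the measuring $\mu_g(A,B)(\ev) = \ev \circ (g\otimes A)\colon C\otimes A \to B$. Naturality of $\Phi$ in $C$ is automatic, being an instance of the Yoneda lemma.

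Next I would check that each component $\Phi_C$ is a bijection, which is precisely the content of the universal property recorded in \cref{definition: universal measuring}. Indeed, given any measuring $f\colon C\otimes A \to B$, that universal property provides a unique coalgebra homomorphism $!\colon C \to \ubalg(A,B)$ with $f = \ev \circ (!\otimes A)$; existence of $!$ says $\Phi_C$ is surjective, and uniqueness of $!$ says $\Phi_C$ is injective. A natural transformation that is bijective in every component is a natural isomorphism, so $\mu_-(A,B) \cong \bcoalg(-,\ubalg(A,B))$, which is exactly the assertion that $\mu_-(A,B)$ is represented by $\ubalg(A,B)$.

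The only place where work is hidden is the existence statement, and that is already discharged by \cref{theorem: equalizer formula for universal measuring}; once $\ubalg(A,B)$ is known to be a terminal object in the category of measurings, representability follows by the formal manipulation above. I therefore expect no genuine obstacle beyond correctly invoking the existence theorem and checking that the Yoneda transformation $\Phi$ is the same one induced by $\ev$.
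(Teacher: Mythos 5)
Your argument is correct and matches the paper's (largely implicit) reasoning: the paper records the natural bijection $\mu_C(A,B)\cong\bcoalg(C,\ubalg(A,B))$ as an immediate consequence of the terminal-object universal property right after \cref{definition: universal measuring}, and the corollary simply combines that observation with the existence guaranteed by \cref{theorem: equalizer formula for universal measuring}, exactly as you do. Your Yoneda-style unpacking of why the universal property yields representability is a correct and slightly more explicit rendering of the same step.
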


\subsection{Measuring via the convolution algebra}

We will now describe the last representable object for the measuring functor.

\begin{definition}
    [Convolution algebra, cf. \cref{def: first convolution algebra}]
    \label{def: second convolution algebra}
    Suppose that $\C$ is closed.
    Given a coalgebra $(C, \chi)$ and an algebra $(A, \alpha)$ in $\C$, we define an algebra structure on $\inthom{\C}{C}{A}$, called the \emph{convolution algebra}, which is denoted $[(C, \chi), (A, \alpha)]$ or simply $[C,A]$, as follows. 
The algebra structure
$F[C,A]\rightarrow [C,A]$
is the composition
\[
\begin{tikzcd}
F(\inthom{\C}{C}{A}) \ar{r}{\widetilde{\nabla}_{C,A}} & {\inthom{\C}{FC}{FA}}\ar{r}{\alpha_*{\chi}^*} & [10pt] {\inthom{\C}{C}{A}},
    \end{tikzcd}
\]
where $\alpha_*\chi^*$ denotes postcomposition by $\alpha$ and precomposition by $\chi$.
The convolution algebra construction lifts the internal hom to a functor
\[
[-,-]\colon \bcoalg^\op \times \balg\longrightarrow \balg.
\]
\end{definition}

The convolution algebra provides a representing object for $\mu_C(-,B): \balg^\op \to \Set$. Indeed, we have the following bijection natural in $C,A,B$.
\[
\mu_C(A,B)\cong \balg(A, [C, B]).
\]
In other words, a measuring $\phi\colon C\otimes A\rightarrow B$ corresponds to an algebra homomorphism $\phi':A\rightarrow [C,B]$ under the bijection $\C(C\otimes A, B)\cong \C(A, \inthom{\C}{C}{B})$. Indeed, notice that $\phi'$ is a homomorphism if and only if the following diagram, adjunct to the one appearing in \cref{definition: measuring coalgebra}, commutes.
 \[
     \begin{tikzcd}[row sep=0]
       & F([C,B]) \ar{r}{\widetilde{\nabla}_{C,B}} & {[F(C), F(B)]} \ar{dd}{\beta_*{\chi}^*}\\
      F(A) \ar{ur}{F(\phi')} \ar{dr}[swap]{\alpha} \\
      & A \ar{r}{\phi'} &  {[C,B]}
     \end{tikzcd}
    \]



\begin{remark}\label{remark: universal measuring as an adjoint}
The convolution algebra also provides an alternative characterization of the algebra $C\triangleright A$ and coalgebra $\ubalg(A,B)$.
As limits in $\balg$ and colimits in $\bcoalg$ are determined in $\C$ \cite{varieties} and the internal hom $\inthom{\C}{-}{-}:\C^\op\times \C\rightarrow \C$ preserves limits, the functor $[-,-]:\bcoalg^\op\times \balg\rightarrow \balg$ also preserves limits. 
Moreover, fixing a coalgebra $C$, the induced functor $[C, -]:\balg\rightarrow \balg$ is accessible since filtered colimits in $\balg$ are computed in $\C$ (see \cite[5.6]{varieties}). 
Therefore, by the adjoint functor theorem \cite[1.66]{presentable}, the functor $[C,-]$ is a right adjoint. Its left adjoint is precisely $C\triangleright -:\balg\rightarrow \balg$. Indeed, for any algebras $A$ and $B$, we obtain the following bijection, natural in $C,A,B$.
\[
\balg \big( C\triangleright A, B\big) \cong \balg \big(A, [C,B]\big).
\]
Notice we can also determine the universal measuring by using the adjoint functors.
Fixing now an algebra $B$, the opposite functor $[-,B]^\op:\bcoalg\rightarrow \balg^\op$ preserves colimits, where the domain is locally presentable and the codomain is essentially locally small. By the adjoint functor theorem \cite[1.66]{presentable} and \cite[5.5.2.10]{HTT}, this functor is a left adjoint. 
Its right adjoint is precisely the functor $\ubalg(-, B):\balg^\op\rightarrow \bcoalg$. Indeed, for any algebra $A$ and $B$ and any coalgebra $C$, we have the following bijection, natural in $C,A,B$.
\[
\bcoalg\big( C, \ubalg(A,B) \big) \cong \balg \big( A, [C,B]\big).
\]
\end{remark}

Combining the identifications, we see that the measuring functor is representable in each factor:
\[
\mu_C(A,B) \cong \bcoalg\big( C, \ubalg(A,B) \big)\cong \balg \big( A, [C,B]\big) \cong \balg \big( C\triangleright A, B\big).
\]
In other words, for any algebra $A$ and $B$ and any coalgebra $C$, the following data are equivalent.
\vspace{0.2em}
\begin{center}
    \begin{tabular}{|c|c|c|c|c|}
    \hline  $C\otimes A\rightarrow B$ &  $C\rightarrow \inthom{\C}{A}{B}$ & $C\rightarrow \ubalg(A,B)$ & $A\rightarrow [C,B]$ &  $C\triangleright A\rightarrow B$ \\
      measuring \vspace{-0.08em} & measuring \vspace{-0.08em}& coalgebra  \vspace{-0.08em}& algebra\vspace{-0.08em} & algebra \vspace{-0.08em} \\ 
        &  & homomorphism & homomorphism & homomorphism \\\hline 
    \end{tabular}
\end{center}
\vspace{0.1em}

\begin{definition}
    \label{ex: dual}
    Assuming that $\C$ is locally presentable and $F$ is accessible, $\balg$ has an initial object which we denote by $N$.

    Let $(-)^*:\bcoalg^\op\rightarrow \balg$ denote the functor $[-,N]$, and call $C^*$ the \emph{dual algebra of $C$} for any coalgebra $C$.

    Let $(-)^\circ:\balg^\op\rightarrow \bcoalg$ denote the functor $\ubalg(-,N)$, and call $A^\circ$ the \emph{dual coalgebra of $A$} for any algebra $A$.
    
    These functors form a dual adjunction since we have the following bijection, natural in $C,A$:
    \[
    \balg(A, C^*)\cong \bcoalg(C, A^\circ).
    \]
\end{definition}

\subsection{Measuring as an enrichment}

We now come to the main punchline of the general theory presented in this paper: that $\ubalg(-,-)$ gives the category of algebras an enrichment in coalgebras. First, we describe how to compose measurings.

\begin{proposition}\label{prop:sym-mon}
    The category $\bcoalg$ has a symmetric monoidal structure for which the forgetful functor $\bcoalg\rightarrow \C$ is strong symmetric monoidal.
\end{proposition}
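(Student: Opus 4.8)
The plan is to transport the symmetric monoidal structure of $\C$ up to $\bcoalg$ along the forgetful functor $U\colon \bcoalg \to \C$, equipping the underlying tensor product $U(C)\otimes U(D)$ with a canonical coalgebra structure manufactured from $\nabla$. Concretely, for coalgebras $(C,\chi_C)$ and $(D,\chi_D)$ I would define $C\boxtimes D := (C\otimes D, \chi_{C\otimes D})$, where $\chi_{C\otimes D}$ is the composite
\[
C \otimes D \xrightarrow{\ \chi_C \otimes \chi_D\ } F(C)\otimes F(D) \xrightarrow{\ \nabla_{C,D}\ } F(C\otimes D),
\]
and take the monoidal unit to be the coalgebra $(\I,\eta)$, i.e.\ the monoidal unit of $\C$ with the coalgebra structure $\eta\colon \I \to F(\I)$ supplied by \cref{definition: lax symmetric monoidal}. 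By construction $U(C\boxtimes D)=U(C)\otimes U(D)$ on the nose and $U(\I,\eta)=\I$, so the monoidal comparison morphisms for $U$ are identities; thus, once $\boxtimes$ is shown to be a symmetric monoidal structure, $U$ is automatically \emph{strict}, hence strong, symmetric monoidal.

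The first routine step is functoriality: given coalgebra homomorphisms $f\colon C\to C'$ and $g\colon D\to D'$, I would check that $f\otimes g$ is again a coalgebra homomorphism $C\boxtimes D \to C'\boxtimes D'$, which is a short diagram chase splicing the homomorphism squares for $f$ and $g$ to the naturality of $\nabla$ in both variables. The main step, and the principal obstacle, is to show that the associator, the two unitors, and the symmetry of $\C$ each lift to coalgebra homomorphisms. For the associator one must verify that $\alpha_{C,D,E}$ intertwines the two coalgebra structures on $(C\otimes D)\otimes E$ and $C\otimes(D\otimes E)$; after expanding both structures in terms of $\chi_C,\chi_D,\chi_E$ and $\nabla$, and using naturality of $\alpha$ in $\C$ to slide the $\chi$'s past, the residual identity relating $\nabla_{C\otimes D,E}\circ(\nabla_{C,D}\otimes \id)$ and $\nabla_{C,D\otimes E}\circ(\id\otimes \nabla_{D,E})$ through $F(\alpha)$ is precisely the associativity coherence diagram for the lax monoidal $F$. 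In the same way, the unitor liftings reduce to the left and right unitality axioms involving $\eta$, and the lifting of the symmetry $\sigma_{C,D}$ reduces exactly to the commutativity axiom for $\nabla$. The only real work here is recognizing each lifting condition as one of the coherence diagrams packaged into \cref{definition: lax symmetric monoidal} and spelled out in \cref{subsection appendix: diagrams for lax symmetric monoidal functors}.

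Finally, I would dispatch the coherence axioms for $\boxtimes$ itself — the pentagon, the triangle, and the two hexagon identities — by appealing to faithfulness of $U$: each such equation of coalgebra homomorphisms maps under $U$ to the corresponding equation in $\C$, which already holds, and since $U$ reflects equalities of morphisms the identities transfer for free. This yields that $(\bcoalg, \boxtimes, (\I,\eta))$ is a symmetric monoidal category and, by the strictness observation above, that $U$ is strong symmetric monoidal, as claimed.
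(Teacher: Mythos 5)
Your proposal is correct and follows essentially the same route as the paper: the same tensor product $\nabla_{C,D}\circ(\chi_C\otimes\chi_D)$, the same unit $(\I,\eta)$, the liftings of the associator, unitors, and symmetry reduced to the corresponding coherence axioms of the lax symmetric monoidal structure on $F$, and the coherence identities inherited from $\C$. Your added observations (strictness of the forgetful functor and the faithfulness argument for transferring the pentagon/triangle/hexagon axioms) are just slightly more explicit versions of what the paper leaves implicit.
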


\begin{proof}
    Suppose $(C, \chi_C)$ and $(D, \chi_D)$ are coalgebras. Then $C\otimes D$ has the following coalgebra structure.
    \[
\begin{tikzcd}
{C \otimes D} \ar{r}{\chi_C \otimes \chi_D} & [1em] {F(C) \otimes F(D)} \ar{r}{\nabla_{C,D}} & {F(C\otimes D)}
\end{tikzcd}
\]
The morphism $\eta:\I\rightarrow F(\I)$ provides the coalgebraic structure on $\I$.
One can verify that $(\bcoalg, \otimes, (\I, \eta))$ is a symmetric monoidal category (see details in \cite[Appendix A.4]{NP23}).
\end{proof}

Now we can prove our main theorem.

\begin{theorem}
    [{Proof in \cite[Appendix A.5]{NP23}}]
    \label{thm:enriched}
Suppose that $\C$ is locally presentable and closed and that $F$ is accessible.
Then the category $\balg$ is enriched, tensored, and powered over the symmetric monoidal category $\bcoalg$ respectively via
\[
\balg^\op \times \balg\xrightarrow{\ubalg(-,-)} \bcoalg, \quad \bcoalg\times \balg \xrightarrow{-\triangleright-} \balg, \quad \bcoalg^\op\times \balg\xrightarrow{[-,-]} \balg.
\]
\end{theorem}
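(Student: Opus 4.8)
The plan is to separate the three assertions and to observe that, once the enrichment is in place, the tensored and powered structures are immediate from representability. Indeed, combining \cref{thm: copower}, the convolution algebra (\cref{def: second convolution algebra}), and \cref{theorem: equalizer formula for universal measuring} we already have, natural in $C,A,B$,
\[
\balg(C \triangleright A, B) \cong \mu_C(A,B) \cong \balg(A, [C,B]) \cong \bcoalg\big(C, \ubalg(A,B)\big).
\]
Reading off the outer isomorphisms, $\balg(C \triangleright A, B) \cong \bcoalg(C, \ubalg(A,B))$ is, in its underlying-category form, precisely the defining universal property of the tensor (copower) of $C$ with $A$, and $\balg(A,[C,B]) \cong \bcoalg(C, \ubalg(A,B))$ is the defining universal property of the power (cotensor) of $C$ with $B$, relative to the hom-objects $\ubalg(-,-)$. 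So the tensored and powered claims reduce to the statement that $\ubalg(-,-)$ is a genuine $\bcoalg$-enrichment, and the bulk of the proof is to establish that enrichment.

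For the enrichment I would follow the recipe of \cref{ssec: Composing partial homomorphisms}, defining composition first on measurings and then transporting it along representability. Given measurings $\phi : C \otimes A \to B$ and $\psi : D \otimes B \to T$, define $\psi \ast \phi$ to be
\[
(D \otimes C) \otimes A \xrightarrow{\ \cong\ } D \otimes (C \otimes A) \xrightarrow{\id_D \otimes \phi} D \otimes B \xrightarrow{\ \psi\ } T,
\]
the first arrow being the associator. Using the measuring squares of \cref{definition: measuring coalgebra} for $\phi$ and for $\psi$, together with the coalgebra structure on $D \otimes C$ supplied by \cref{prop:sym-mon} (namely $\nabla_{D,C} \circ (\chi_D \otimes \chi_C)$), a diagram chase resting on the associativity coherence of the lax structure $\nabla$ shows that $\psi \ast \phi$ is again a measuring. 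This yields a map $\mu_D(B,T) \times \mu_C(A,B) \to \mu_{D \otimes C}(A,T)$ natural in all variables; rewriting each factor via $\mu_{(-)}(-,-) \cong \bcoalg(-, \ubalg(-,-))$ and evaluating the resulting transformation at the pair $(\id_{\ubalg(B,T)}, \id_{\ubalg(A,B)})$ produces the composition morphism $\ubalg(B,T) \otimes \ubalg(A,B) \to \ubalg(A,T)$ in $\bcoalg$. The identity $\I \to \ubalg(A,A)$ is the image of $\id_A$ under $\balg(A,A) \cong \mu_\I(A,A) \cong \bcoalg(\I, \ubalg(A,A))$.

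It remains to verify the associativity and unit axioms of an enriched category. The decisive simplification is that every structure morphism was defined through the isomorphism $\mu_C(A,B) \cong \bcoalg(C, \ubalg(A,B))$, so by the Yoneda lemma each axiom — an equation between two morphisms of $\bcoalg$ out of a tensor of hom-coalgebras into some $\ubalg(A,S)$ — may be checked after translating each side into its associated measuring via the universal evaluation $\ev$. At that level $\psi \ast \phi$ is nothing but composition of underlying morphisms of $\C$ reassociated by the associator, so associativity of $\ast$ follows from associativity of composition in $\C$ together with the pentagon for $(\bcoalg, \otimes, \I)$, and the two unit laws follow from its triangle identities; all of these coherences are exactly what \cref{prop:sym-mon} provides. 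I expect this final bookkeeping — tracking how the monoidal coherence isomorphisms of $\bcoalg$ line up against the reassociations implicit in iterated composites of measurings — to be the main obstacle, since the existence and naturality of every constituent map is already forced by the universal properties established earlier. With the enrichment in hand, the tensored and powered assertions follow from the displayed isomorphisms above, completing the proof.
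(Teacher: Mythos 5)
Your argument is essentially correct, but it takes a genuinely different route from the paper's. The paper's proof in \cref{subsection appendix: algebras enriched over coalgebras} is a two-line application of a general theorem on actions: it observes that $[-,-]^\op\colon \bcoalg\times\balg^\op\to\balg^\op$ makes $\balg^\op$ a left module (actegory) over $\bcoalg$, that each $[-,B]^\op$ has a right adjoint $\ubalg(-,B)$ by \cref{remark: universal measuring as an adjoint}, and then cites a result of the measuring literature stating that an action with a parametrized right adjoint induces an enrichment with tensor given by the action and hom-objects given by the adjoints. All of the coherence you propose to verify by hand — that composition of measurings is well defined, associative, and unital — is absorbed into the coherence of the module structure, which in turn reduces to \cref{prop:sym-mon} and the functoriality of $[-,-]$. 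Your direct approach, by contrast, builds the composition morphism explicitly from the pairing $\mu_D(B,T)\times\mu_C(A,B)\to\mu_{D\otimes C}(A,T)$ and checks the enriched-category axioms via Yoneda against the universal evaluation; this is more self-contained and makes the computational content of the enrichment visible (indeed it matches the informal discussion in \cref{ssec: Composing partial homomorphisms} and the remark following \cref{thm:enriched}), at the cost of a nontrivial diagram chase to show $\psi\ast\phi$ is a measuring and careful bookkeeping of the coherence isomorphisms. One point to be slightly more careful about in your version: since $\bcoalg$ is not assumed closed, ``tensored'' and ``powered'' should be read in the action-theoretic sense rather than via an internal hom of $\bcoalg$, so the underlying-category isomorphisms you display must additionally be shown compatible with the enriched composition (this is exactly what the module formulation hands you for free).
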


\begin{example}
    [{Details in \cite[Appendix A.7]{NP23}}]
    \label{example: further list of examples}
    Suppose that $\C$ is locally presentable and closed.
    The following endofunctors on $\C$ are accessible and lax symmetric monoidal.
    \begin{description}
        \item[$(\id)$] The identity endofunctor $\id_\C$.
        \item[$(A)$] The constant endofunctor that sends each object to a fixed commutative monoid $A$ in $\C$.
        \item[$(GF)$] The composition $GF$ of accessible, lax symmetric monoidal endofunctors $F$ and $G$.
        \item[$(F \otimes G)$] The pointwise tensor product $F \otimes G$ of accessible, lax symmetric monoidal endofunctors $F$ and $G$, assuming $\C$ is closed.
        \item[$(F + G)$] The pointwise coproduct $F + G$ of an accessible, lax symmetric monoidal endofunctor $F$ and an accessible endofunctor $G$ equipped with natural transformations $GX \otimes GY \to G(X \otimes Y)$, $\lambda: FX \otimes GY \to G(X \otimes Y)$, $\rho: GX \otimes FY \to G(X \otimes Y)$ satisfying the axioms described in \cite[Appendix A.7]{NP23}, assuming $\C$ is closed.
    \item[$(\id^A)$] The exponential $\id^A$ for any object $A$ of $\C$, assuming the monoidal product on $\C$ is cartesian closed.
    \item[($W$-types)] A polynomial endofunctor associated to a morphism $f: X \to Y$ in $\Set$, given a commutative monoid structure on $Y$ and an oplax symmetric monoidal structure on the preimage functor $f^{-1}: C \to \Set$.
    \item[(d.e.s.)] A discrete equational system, assuming that the monoidal structure on $\C$ is cocarte\-sian and that $\C$ has binary products that preserve filtered colimits.
\end{description}
\end{example}

On some occasions, the category of coalgebras of $F$ can be interesting while its category of algebras is less so. 
For instance, given an alphabet $\Sigma$, coalgebras over the endofunctor $F(X)=2\times X^\Sigma$ in $\Set$ are automata but the initial algebra remains $\emptyset$. To remedy this, we can extend our main result into the following theorem. 

\begin{theorem}[{Proof in \cite[Appendix A.6]{NP23}}]\label{theorem: mixed enriched GF over F}
    Suppose that $\C$ is locally presentable and closed and that $F$ is also accessible.
Let $G\colon\C\rightarrow \C$ be a $\C$-enriched functor that is accessible. 
Then $\balg_{GF}$ is enriched, tensored and powered over $\bcoalg_F$.
\end{theorem}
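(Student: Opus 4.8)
The plan is to re-run the development of \cref{sec:umeas} with the lax symmetric monoidal structure $\nabla$ of $F$ replaced by a \emph{mixed} structure built from $\nabla$ and the strength of $G$. The starting observation is that, since $\C$ is symmetric monoidal closed, a $\C$-enriched endofunctor is the same as a strong endofunctor; write $t_{X,Y}\colon X\otimes G(Y)\to G(X\otimes Y)$ for the (tensorial) strength of $G$, which satisfies the usual associativity and unit coherences, together with its symmetric counterpart $G(X)\otimes Y\to G(X\otimes Y)$ induced by the same enrichment. Composing with $\nabla$ yields a natural transformation
\[
\nabla'_{X,Y}\colon F(X)\otimes GF(Y)\xrightarrow{t_{F(X),F(Y)}} G\big(F(X)\otimes F(Y)\big)\xrightarrow{G(\nabla_{X,Y})} GF(X\otimes Y).
\]
Note that $GF$ is typically \emph{not} lax monoidal — there is no natural map $GF(X)\otimes GF(Y)\to GF(X\otimes Y)$ — but this is immaterial: in a measuring the coalgebra always occupies the first slot (through $F$) and the algebra the second (through $GF$), so the one-sided transformation $\nabla'$ is exactly what is needed. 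Since $G$ and $F$ are accessible, so is their composite $GF$; hence $\balg_{GF}$ is locally presentable, complete and cocomplete with a free functor $\free$, while $\bcoalg_F$ and its symmetric monoidal structure (\cref{prop:sym-mon}) depend only on $F$ and are unchanged.

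First I would define a measuring from a $GF$-algebra $A$ to a $GF$-algebra $B$ by an $F$-coalgebra $C$ exactly as in \cref{definition: measuring coalgebra}, but with $\nabla'$ in place of $\nabla$, and record the functoriality of $\mu\colon\bcoalg_F^\op\times\balg_{GF}^\op\times\balg_{GF}\to\Set$ as before. Next I would transport the convolution algebra: using the strength of $G$ together with $\nabla$, the coalgebra structure $\chi_C$, evaluation, and $\beta$, one equips $\inthom{\C}{C}{B}$ with a $GF$-algebra structure $[C,B]$, obtained as the adjunct of $GF(\inthom{\C}{C}{B})\otimes C\to GFB\xrightarrow{\beta}B$. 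This gives a functor $[-,-]\colon\bcoalg_F^\op\times\balg_{GF}\to\balg_{GF}$, and the adjunct-diagram argument following \cref{def: second convolution algebra} then shows, verbatim, that $\phi\colon C\otimes A\to B$ is a measuring if and only if its transpose $A\to[C,B]$ under $\C(C\otimes A,B)\cong\C(A,\inthom{\C}{C}{B})$ is a $GF$-algebra homomorphism, so that $\mu_C(A,B)\cong\balg_{GF}(A,[C,B])$.

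With the convolution functor in hand, I would obtain the remaining two representing objects exactly as in \cref{remark: universal measuring as an adjoint}. Limits and filtered colimits in $\balg_{GF}$ are computed in $\C$, and colimits in $\bcoalg_F$ are computed in $\C$; since $\inthom{\C}{-}{-}$ preserves limits, $[-,-]$ preserves limits and $[C,-]$ is accessible, so the adjoint functor theorem \cite[1.66]{presentable} produces its left adjoint $C\triangleright-$, while $[-,B]^\op$ preserves colimits and hence, by \cite[1.66]{presentable} and \cite[5.5.2.10]{HTT}, has a right adjoint $\ubalg_{GF}(-,B)$. This yields the triple identification $\mu_C(A,B)\cong\bcoalg_F(C,\ubalg_{GF}(A,B))\cong\balg_{GF}(A,[C,B])\cong\balg_{GF}(C\triangleright A,B)$, exhibiting the hom-coalgebra, the power, and the copower. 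The enrichment, tensoring, and powering axioms are then verified as in the proof of \cref{thm:enriched}: composition $\ubalg_{GF}(B,E)\otimes\ubalg_{GF}(A,B)\to\ubalg_{GF}(A,E)$ is induced through the universal property by sending measurings $\phi\colon C\otimes A\to B$ and $\psi\colon D\otimes B\to E$ to the composite $D\otimes C\otimes A\xrightarrow{\id\otimes\phi}D\otimes B\xrightarrow{\psi}E$, and identities come from $\mu_\I(A,A)\cong\balg_{GF}(A,A)$.

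I expect the main obstacle to be the coherence needed to show that this composite of measurings is again a measuring and that composition is associative and unital — equivalently, that $\nabla'$ interacts correctly with the coalgebra structure $\nabla_{D,C}\circ(\chi_D\otimes\chi_C)$ on $D\otimes C$ and with the unit $\eta\colon\I\to F(\I)$. The point is that these reduce precisely to the associativity and unit axioms of the strength $t$ of the $\C$-enriched functor $G$, combined with the associativity and unitality of the lax monoidal $F$; crucially, no commutativity of $G$ is required, since the coalgebra factor is always threaded through $F$ while $G$ acts only on the algebra factor. Once this compatibility of $\nabla'$ is established, every diagram chase in \cref{thm: copower}, \cref{theorem: equalizer formula for universal measuring}, and \cref{thm:enriched} goes through with $\nabla'$ substituted for $\nabla$, giving the claimed enrichment, tensoring, and powering of $\balg_{GF}$ over $\bcoalg_F$.
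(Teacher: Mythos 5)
Your proposal follows essentially the same route as the paper: the paper's proof likewise consists of equipping the internal hom $\inthom{\C}{C}{A}$ with a $GF$-algebra structure as the adjunct of $GF(\inthom{\C}{C}{A})\otimes C\rightarrow GF(\inthom{\C}{C}{A})\otimes F(C)\rightarrow G\big(F(\inthom{\C}{C}{A})\otimes F(C)\big)\rightarrow GF(A)\rightarrow A$, using exactly the strength $G(X)\otimes Y\rightarrow G(X\otimes Y)$ supplied by the $\C$-enrichment of $G$ together with the lax monoidal structure of $F$, and then letting the adjoint-functor-theorem and module-structure machinery of \cref{remark: universal measuring as an adjoint} and \cref{thm:enriched} carry the rest. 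Your mixed transformation $\nabla'$ and the observation that the coalgebra slot is only ever threaded through $F$ is precisely the one-sidedness the paper exploits, so the proposal is correct and matches the paper's argument.
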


\begin{example}
If $F(X)=2\times X^\Sigma$, we could consider $G=\id+1$, and thus $\balg_{GF}$ has $\N$ as an initial object and remains enriched in automata.
\end{example}

The enrichment of algebras in coalgebras specify a pairing of coalgebras 
\[
 \ubalg(B,T)\otimes \ubalg(A,B) \longrightarrow \ubalg(A,T),
\]
regarded as an enriched composition, for any algebras $A$, $B$ and $T$.
In more details, the above coalgebra homomorphism is induced by the measuring of
\[
\begin{tikzcd}
  \big(  \ubalg(B,T)\otimes \ubalg(A,B)\big) \otimes A \ar{r}{\id\otimes \ev_{A,B}} & [2em]\ubalg(B,T)\otimes B \ar{r}{\ev_{B,T}} & T.
\end{tikzcd}
\]
In other words, the enrichment is recording precisely that we can compose a measuring $C\otimes A\rightarrow B$ with $D\otimes B\rightarrow T$ to obtain a measuring $(D\otimes C)\otimes A\rightarrow T$.
In particular, our above discussion shows that $\ubalg(A,A)$ is always a monoid object in the symmetric monoidal category $(\bcoalg, \otimes, \I)$.


\subsection{General \texorpdfstring{$C$}{C}-initial objects}

Now we generalize \cref{ssec: Generalizing initial objects}. We can use the extra structure in the enriched category of algebras to specify more algebras than we could in the unenriched category of algebras.

\begin{definition}
    [$C$-initial algebra, cf. \cref{def: first initial algebra} and \cref{def: first terminal c-initial algebra}]
    \label{def: second initial algebra}
    Suppose that $\C$ is locally presentable and closed and that $F$ is accessible.

Given a coalgebra $C$,
    we say an algebra $A$ is a \emph{$C$-initial algebra} if there exists a unique map $C\rightarrow \ubalg(C,X)$, for all algebras $X$.

The \emph{terminal} $C$-initial algebra is the terminal object, if it exists, in the subcategory of $\balg$ spanned by the $C$-initial algebras.
\end{definition}

We end with a result that helped us calculate some terminal $C$-initial algebras in \cref{ssec: Generalizing initial objects}.

\begin{proposition}
    [{Proof in \cite[Appendix A.8]{NP23}}]
    \label{prop:map from initial alg to dual}
    Suppose that $\C$ is locally presentable and closed and that $F$ is accessible.
    There is a unique map from any $C$-initial algebra to $C^*$.
\end{proposition}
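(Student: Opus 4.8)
The plan is to deduce the result directly from the dual adjunction of \cref{ex: dual} by specializing the defining property of a $C$-initial algebra to the initial algebra $N$. The whole argument is a one-step unwinding of the representability bijections, so no genuine construction is needed.

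First I would recall from \cref{ex: dual} the natural bijection
\[
\balg(A, C^*) \cong \bcoalg(C, A^\circ),
\]
where by definition $C^* = [C, N]$ and $A^\circ = \ubalg(A, N)$, with $N$ the initial object of $\balg$ (which exists under the standing hypotheses that $\C$ is locally presentable and closed and $F$ is accessible). In particular this gives $\balg(A, C^*) \cong \bcoalg(C, \ubalg(A, N))$ for any $C$-initial algebra $A$.

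Next I would invoke \cref{def: second initial algebra}: since $A$ is a $C$-initial algebra, there is a unique coalgebra homomorphism $C \to \ubalg(A, X)$ for every algebra $X$. Specializing to $X = N$ shows that $\bcoalg(C, \ubalg(A, N))$ is a singleton. Composing with the bijection above, $\balg(A, C^*)$ is a singleton as well; that is, there is a unique algebra homomorphism $A \to C^*$, which is exactly the claim.

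The only point requiring care — and the closest thing to an obstacle — is bookkeeping: one must confirm that $C^*$ and $\ubalg(A, N)$ both exist (guaranteed by \cref{theorem: equalizer formula for universal measuring} together with the existence of $N$ under the hypotheses) and that the bijection of \cref{ex: dual}, being natural in all of its variables, may legitimately be evaluated at the initial algebra. Once these are noted, the statement follows immediately.
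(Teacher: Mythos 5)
Your proof is correct, and its core is the same bijection the paper ultimately relies on, namely $\balg(A,C^*)=\balg(A,[C,N])\cong\bcoalg(C,\ubalg(A,N))$, combined with the uniqueness clause of $C$-initiality at $X=N$. The difference is one of framing: the paper first identifies $C^*$ as the weighted limit $\lim^C\id_\ubalg$, i.e.\ it computes the full limit $\lim_{X\in\balg}\bcoalg(C,\ubalg(A,X))$ (using that $\bcoalg(C,\ubalg(A,-))$ preserves limits and that the limit of the identity functor on $\balg$ is the initial algebra $\N$) and only then specializes to a $C$-initial $A$. You instead evaluate directly at the initial algebra $N$, which suffices because a unique map into each $\ubalg(A,X)$ in particular gives a unique map into $\ubalg(A,N)$. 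Your route is shorter and avoids the weighted-limit machinery; what the paper's detour buys is the identification $\lim^C\id_\ubalg\cong C^*$ itself, which is quoted in \cref{ssec: Generalizing initial objects} as a statement of independent interest (it generalizes ``an initial object is the limit of the identity functor'' to the enriched setting) and is used there to analyze terminal $C$-initial algebras. One small bookkeeping point in your favour: the hypothesis you flag about existence of $\ubalg(A,N)$ is indeed covered by \cref{theorem: equalizer formula for universal measuring}, and note that the ``$\ubalg(C,X)$'' in \cref{def: second initial algebra} is evidently a typo for $\ubalg(A,X)$, as \cref{def: first initial algebra} confirms; your reading is the intended one.
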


\section{Conclusions \& Vista}

In this paper, we have shown that given a closed symmetric monoidal category $\C$ and an accessible lax symmetric monoidal endofunctor $F$ on $\C$, the category of algebras of $F$ is enriched, tensored, and cotensored in the category of coalgebras of $F$. The algebras of such a functor are of central importance in theoretical computer science, and we hope that identifying such extra structure can shed light on these studies. Indeed, we have demonstrated one use case: we can now specify $C$-initial algebras in an analogous way to initial algebras. We identified a large class of examples of endofunctors that are encompassed by our theory. Thus, we have established the beginning of an \emph{enriched} analogue of the theory of $W$-types. We have also worked out concretely the results for the endofunctor $\id +1 $ on $\Set$, which suggested a meaningful interpretation of the enrichment as partial algebra homomorphisms.  

In future work, we will present similar meaningful  interpretations for other endofunctors of our theory. 
Our future plans involve incorporating features, such as $C$-initial algebras, of this new enriched theory into concrete programming languages like Haskell or Agda.

We also seek to extend the results of \cref{example: terminal C-initial algebra} into more general settings and provide conditions for the existence of the terminal $C$-initial algebras. We will also develop more robust theory from \cref{theorem: mixed enriched GF over F}.
Our partial algebra homomorphisms remain total functions: it would be interesting to develop a theory that encodes maps that are partial both as a function and as algebra homomorphisms.
Lastly in \cref{example: further list of examples}, when we consider the constant functor at an object $A$, we must choose a commutative monoid structure on $A$. What if we had two different monoidal structures on $A$? There are other such choices that are needed in \cref{example: further list of examples}, for instance in our motivating example of W-types. We seek to understand how these choices interact with one another.



\bibliography{biblio}

\appendix

\section{Appendix}

\subsection{Details for \texorpdfstring{\cref{section: guided examples}}{Section \ref{section: guided examples}}}

\begin{lemma}\label{lem:subobject}
        Consider two algebras $A,B$, and suppose that the canonical map $\N \to A$ is an epimorphism. Then $\ubalg(A,B)$ is a subterminal coalgebra, and it is the maximum subterminal coalgebra $C$ such that $\balg(A, [C,B])$ is nonempty.
    \end{lemma}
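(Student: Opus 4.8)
The plan is to prove both claims purely formally from the representability bijection \eqref{eq:univ prop of conv alg}, together with the hypothesis that $\N \to A$ is epi; no computation with the equalizer formula for $\ubalg(A,B)$ is needed. The key observation is that an epimorphism out of the initial object forces uniqueness of outgoing homomorphisms: if $e\colon \N \to A$ is an epimorphism in $\balg$, then for every algebra $Z$ the set $\balg(A,Z)$ has at most one element. Indeed, any two homomorphisms $h_1, h_2\colon A \to Z$ satisfy $h_1 e = h_2 e$, since both composites are algebra homomorphisms out of the initial object $\N$ and hence equal the unique map $\N \to Z$; cancelling the epimorphism $e$ yields $h_1 = h_2$.

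First I would establish that $\ubalg(A,B)$ is subterminal. Applying the previous observation to $Z = [C,B]$, the set $\balg(A,[C,B])$ has at most one element for every coalgebra $C$. Transporting along the bijection $\balg(A,[C,B]) \cong \bcoalg(C, \ubalg(A,B))$ of \eqref{eq:univ prop of conv alg}, the set $\bcoalg(C, \ubalg(A,B))$ then has at most one element for all $C$. This says precisely that $\ubalg(A,B)$ is a subterminal object of $\bcoalg$, i.e. that its canonical map to the terminal coalgebra $\eN$ is a monomorphism; by \cref{lem:char-sub} this is exactly what it means to be a subterminal coalgebra.

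Next I would prove maximality. The coalgebra $\ubalg(A,B)$ itself satisfies the defining condition: under \eqref{eq:univ prop of conv alg} the identity of $\ubalg(A,B)$ corresponds to an element of $\balg(A, [\ubalg(A,B),B])$, namely the one induced by the universal measuring, so this set is nonempty. Now let $C$ be any subterminal coalgebra with $\balg(A,[C,B])$ nonempty. The bijection yields a coalgebra homomorphism $g\colon C \to \ubalg(A,B)$. Composing $g$ with the inclusion $\ubalg(A,B) \hookrightarrow \eN$ produces a map $C \to \eN$ which, by terminality of $\eN$, coincides with the canonical inclusion $C \hookrightarrow \eN$. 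Hence $g$ realizes $C$ as a subobject of $\ubalg(A,B)$ compatibly with the embeddings into $\eN$, i.e. $C \leq \ubalg(A,B)$ in the poset of subterminal coalgebras. This exhibits $\ubalg(A,B)$ as the maximum subterminal coalgebra with the stated property.

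I expect the only delicate point to be the bookkeeping around subobjects of $\eN$ in the maximality step: one must check that the map $g$ obtained abstractly from the hom-set bijection is compatible with the two canonical maps into the terminal coalgebra, which is immediate from terminality but is where the argument could go wrong if stated carelessly. The identification of ``at most one map in from every source'' with ``subterminal coalgebra'' is the other point requiring care, and there I would simply invoke \cref{lem:char-sub}. Everything else is a direct transport along the already established bijections, so there is no genuine computational obstacle.
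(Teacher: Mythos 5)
Your proposal is correct and follows essentially the same route as the paper: both arguments rest on the observation that an epimorphism out of the initial algebra $\N$ forces $\balg(A,Z)$ to have at most one element, and then transport this along the representability bijection $\balg(A,[C,B])\cong\bcoalg(C,\ubalg(A,B))$ to conclude that $\ubalg(A,B)$ admits at most one map in from any coalgebra, hence is subterminal. You go slightly further than the paper's written proof by spelling out the maximality step (nonemptiness of $\balg(A,[\ubalg(A,B),B])$ via the universal measuring, and the comparison of embeddings into $\eN$), which the paper leaves implicit; that part of your argument is also correct.
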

    
    \begin{proof}
        We first claim that $\balg(A, [C,B])$ is empty or a singleton for any coalgebra $C$.
        Since $!_A: \N \to A$ is an epimorphism, the function
        \[\balg(A, [C,B]) \xrightarrow{!^*_A}  \balg(\N, [C,B]) \]
        is an injection, and since $\N$ is initial, $\balg(\N, [C,B]) \cong *$.
        Thus, $\balg(A, [C,B])$ is a singleton, in the case where the only map $\N \to [C,B]$ factors through $A$, and otherwise is empty.

        We now claim that the canonical map $\ind{-}: \ubalg(A,B) \to \eN$ is a monomorphism. Suppose there are maps $a,b: C \to \ubalg(A,B)$ from a coalgebra $C$ such that $\ind{a} = \ind{b}$. Recall that the coalgebra $\ubalg(A,B)$ is defined by the following universal property for any coalgebra $C$
        \[ \bcoalg(C,\ubalg(A,B)) \cong \balg(A, [C,B]).\]
        Thus, $\bcoalg(C,\ubalg(A,B))$ is a singleton, so $a = b$, and $\ind{-}$ is monic.
    \end{proof}

    \begin{lemma}\label{lem: quotient algebras}
        Consider a category $\C$, an endofunctor $F: \C \to \C$, and an algebra $(A,\alpha)$ of $F$.
        \begin{enumerate}
            \item Given a quotient object $Q$ of $A$ for which $\alpha$ restricts to a morphism $\alpha_Q : FQ \to F$, the pair $(Q, \alpha_Q)$ is a quotient object of $(A, \alpha)$.
            \item If the forgetful functor $\mathsf{Alg} \to \C$ preserves epimorphisms, then for every quotient object $(Q, \alpha_Q)$ of $(A, \alpha)$, $Q$ is a quotient object of $A$.
        \end{enumerate}
        Thus, if the forgetful functor preserves epimorphisms, it underlies an isomorphism between the poset of quotient algebras of $(A,\alpha_A)$ and the poset of quotient objects $Q$ of $A$ for which $\alpha$ restricts to a morphism $\alpha_Q : FQ \to Q$.
    \end{lemma}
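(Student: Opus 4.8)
The plan is to reduce the entire statement to two properties of the forgetful functor $U \colon \mathsf{Alg} \to \C$: it is always faithful, and any faithful functor reflects epimorphisms, whereas the displayed hypothesis supplies the complementary fact that $U$ preserves them. So first I would record the reflection principle: if $U(p)$ is an epimorphism in $\C$ and $g \circ p = h \circ p$ for a parallel pair of algebra homomorphisms $g,h$, then applying $U$ gives $U(g) \circ U(p) = U(h) \circ U(p)$, hence $U(g) = U(h)$, and faithfulness yields $g = h$.

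For the first item I would observe that the hypothesis that $\alpha$ restricts to $\alpha_Q$ is literally the commutativity $p \circ \alpha = \alpha_Q \circ Fp$, i.e.\ that the quotient map $p \colon A \to Q$ is an algebra homomorphism $(A,\alpha) \to (Q,\alpha_Q)$. Since $p$ is epic in $\C$ and $U(p) = p$, the reflection principle makes $p$ epic in $\mathsf{Alg}$, so $(Q,\alpha_Q)$ is a quotient object of $(A,\alpha)$. For the second item I would run the identification in the other direction: a quotient object of $(A,\alpha)$ is by definition an epimorphism $p \colon (A,\alpha) \to (Q,\alpha_Q)$ in $\mathsf{Alg}$, the preservation hypothesis makes $U(p) = p$ epic in $\C$ so that $Q$ is a quotient object of $A$, and the algebra-homomorphism condition on $p$ is exactly the assertion that $\alpha$ restricts to $\alpha_Q$.

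Finally I would combine the two items into the assignments induced by $U$: a quotient algebra maps to its underlying quotient object carrying the restricted structure, and a quotient object with restriction maps back to the corresponding quotient algebra; the first two parts show these are well defined and mutually inverse. Monotonicity in the direction from $\mathsf{Alg}$ to $\C$ is immediate, since $U$ carries a comparison homomorphism $q$ (with $q \circ p = p'$) to the $\C$-morphism $U(q)$ witnessing the same inequality.

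\textbf{The main obstacle} is the reverse monotonicity: given a $\C$-morphism $\bar q$ with $\bar q \circ p = p'$, one must check it is an algebra homomorphism $(Q,\alpha_Q) \to (Q',\alpha_{Q'})$. Precomposing the target identity $\bar q \circ \alpha_Q = \alpha_{Q'} \circ F\bar q$ with $Fp$ and substituting the two restriction equations together with $\bar q \circ p = p'$ reduces both sides to $p' \circ \alpha$; cancelling $Fp$ — and, for the same reason, checking that the restricted structure on a given quotient object is unique — is precisely the point where one needs the image $Fp$ to be epic. I would therefore complete the argument either by invoking that $F$ preserves the relevant epimorphisms (as happens in the intended applications, e.g.\ $\id + 1$ on $\Set$, where $F$ preserves surjections), or by taking the restricted structure $\alpha_Q$ as part of the data indexing the second poset so that the bijection on objects is forced, and then conclude that $U$ restricts to the asserted order isomorphism.
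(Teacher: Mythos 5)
Your treatment of items (1) and (2) is correct and is essentially the paper's own argument: for (1) the paper likewise notes that the quotient map underlies an algebra homomorphism and that two algebra maps $f,g$ out of $(Q,\alpha_Q)$ agreeing after $q$ must have equal underlying maps (epi-ness in $\C$) and hence be equal (faithfulness); for (2) the paper's proof is only a fragment (it begins ``Suppose the forgetful functor preserves pushouts\dots'' and trails off), whereas your one-line deduction from the preservation hypothesis is what is actually needed. Where you genuinely add value is the final poset-isomorphism claim, which the paper does not prove at all. The obstacle you isolate is real: both the uniqueness of the restricted structure $\alpha_Q$ satisfying $p\circ\alpha=\alpha_Q\circ Fp$ and the reverse monotonicity (promoting a $\C$-morphism $\bar q$ with $\bar q\circ p=p'$ to an algebra homomorphism) come down to cancelling $Fp$, i.e.\ to $Fp$ being an epimorphism, and this does not follow from the stated hypothesis that the forgetful functor preserves epimorphisms. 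Your first proposed repair is exactly what the paper implicitly does: in \cref{ex: quotient algebras for id + 1} the justification given for the isomorphism is that $\id+1$ \emph{preserves surjections}, i.e.\ that $Fp$ is epic in the relevant cases. So your proposal is correct where the paper's proof has content, supplies the missing argument for item (2), and correctly diagnoses (and patches) the hypothesis needed for the concluding isomorphism of posets.
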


    \begin{proof}
        Consider a a quotient object $Q$ of $A$ for which $\alpha$ restricts to a morphism $\alpha_Q : FQ \to F$. Then $(Q, \alpha_Q)$ is an algebra of $F$. The quotient map $A \to Q$ underlies a total algebra homomorphism $q: (A, \alpha) \to (Q, \alpha_Q)$. Consider another algebra $(B, \beta)$ and two maps $f,g: (Q, \alpha_Q) \to (B, \beta)$ such that $fq = gq$. Then we find that the underlying maps of $f$ and $g$ in $\C$ coincide, and thus $f = g$.

        Suppose that the forgetful functor $\mathsf{Alg} \to \C$ preserves pushouts. Then it preserves epimorphisms
    \end{proof}

    \begin{example}\label{ex: quotient algebras for id + 1}
        Consider the endofunctor $\id + 1$ on $\Set$. This preserves surjections, so there is an isomorphism between the poset of quotient algebras of an algebra $(A,\alpha)$ and the poset of quotient sets $Q$ of $A$ for which $\alpha$ restricts to a morphism $\alpha_Q : FQ \to Q$.
    \end{example}

    \begin{definition}
        Consider a category $\C$ with a terminal object $*$, an endofunctor $F: \C \to \C$, and suppose that the category of coalgebras of $F$ has a terminal coalgebra $\eN$.

        Call a point $* \to \eN$ an \emph{index}. For any index $n$, let $\bcoalg^n$ denote the category whose objects are pairs $((X, \chi_X), x)$ where $(X, \chi_X)$ is a coalgebra and $x \in X$ such that $\ind{x} = n$.
        
        Say that a coalgebra is \emph{freely generated by a point of index $n$} if it is an initial object in $\bcoalg^n$. Say that a coalgebra is \emph{generated by a point of index $n$} if it is the vertex of a cone on the identity endofunctor on $\bcoalg^n$.
    \end{definition}

    \begin{definition}
        Say that $F : \C \to \C $ is \emph{well-equipped} if $\C$ is well-pointed and for every index $n$, there is a coalgebra generated by a point of index $n$.
    \end{definition}

    \begin{lemma}\label{lem:char-sub}
        Consider a category $\C$, an endofunctor $F$, and a coalgebra $(C, \chi)$.
        \begin{enumerate}
            \item Given a subobject $S$ of $C$ for which $\chi$ restricts to a morphism $\chi_S: S \to FS$, the pair $(S, \chi_S)$ is a subobject of $(C, \chi)$.
            \item If $F$ is well-equipped, for every subcoalgebra $(S, \chi_S)$ of $c$, we have that $S$ is a subobject of $C$.
        \end{enumerate}
        Thus, if $F$ is well-equipped, there is an isomorphism between the poset of subobjects of $(C, \chi_C)$ and the poset of subobjects of $C$ for which $\chi_C$ restricts to a morphism $\chi_S: FS \to S$.
    \end{lemma}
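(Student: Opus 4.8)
The plan is to treat this as the exact dual of \cref{lem: quotient algebras}, so that part (1) is formal and part (2) carries all the content, with the closing poset isomorphism being bookkeeping. Part (1) dualizes the quotient argument directly: given a subobject $s\colon S \hookrightarrow C$ in $\C$ for which $\chi$ restricts to $\chi_S\colon S \to FS$ (that is, $\chi s = Fs \circ \chi_S$), the pair $(S,\chi_S)$ is automatically a coalgebra and $s$ underlies a coalgebra homomorphism. To see it is a subobject in $\bcoalg$, suppose $f,g\colon (T,\chi_T) \to (S,\chi_S)$ satisfy $sf = sg$; since the forgetful functor is faithful and $s$ is monic in $\C$, the underlying maps agree, hence $f = g$. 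No hypothesis on $F$ is needed here.

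For part (2) the difficulty is that the forgetful functor $\bcoalg \to \C$ is a left adjoint (its right adjoint being $\cofree$) and so need not reflect monomorphisms; this is precisely the gap that \emph{well-equipped} is designed to close, mirroring the role of ``preserves epimorphisms'' in \cref{lem: quotient algebras}. The argument runs as follows. Since $\C$ is well-pointed, $\term$ is a generator, and in such a category a map is monic exactly when it is injective on global points; so it suffices to show that for points $a,b\colon \term \to S$ with $sa = sb =: p$ we have $a = b$. Because $s$ is a coalgebra homomorphism it commutes with the canonical maps $\ind{-}$ to the terminal coalgebra $\eN$, so $\ind{a} = \ind{sa} = \ind{p} = \ind{sb} = \ind{b}$; write $n$ for this common index. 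Then $((S,\chi_S),a)$, $((S,\chi_S),b)$ and $((C,\chi),p)$ are all objects of $\bcoalg^n$, and $s$ is a morphism $((S,\chi_S),a)\to((C,\chi),p)$ as well as $((S,\chi_S),b)\to((C,\chi),p)$.

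Next I would invoke the coalgebra $G_n$ generated by a point of index $n$, which exists by well-equippedness: it is the vertex of a cone on the identity functor of $\bcoalg^n$, hence an object of $\bcoalg^n$ with distinguished point $g$, equipped with a natural family of legs $\lambda_X\colon G_n \to X$. Taking $\phi_a := \lambda_{((S,\chi_S),a)}$ and $\phi_b := \lambda_{((S,\chi_S),b)}$, naturality of the cone against the morphism $s$ forces $s\phi_a = \lambda_{((C,\chi),p)} = s\phi_b$. Since $s$ is monic in $\bcoalg$ by hypothesis, $\phi_a = \phi_b$; evaluating the underlying maps at the generating point $g$ (and using that legs in $\bcoalg^n$ preserve points) yields $a = \phi_a(g) = \phi_b(g) = b$, as required.

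I expect the cone-naturality step to be the main obstacle: one must check carefully that $\bcoalg^n$ genuinely contains $G_n$ as the cone vertex (so that the point $g$ is available to be evaluated) and that $s$ is genuinely a morphism in $\bcoalg^n$ out of both pointed objects, since this is exactly what pins the two legs $s\phi_a, s\phi_b$ to the single leg $\lambda_{((C,\chi),p)}$. Once parts (1) and (2) are established, the final claim is routine: part (1) sends a compatible subobject of $C$ to a subcoalgebra, and part (2) sends a subcoalgebra to its underlying subobject, which the restriction condition $\chi s = Fs \circ \chi_S$ makes compatible. These two assignments are mutually inverse and manifestly order-preserving, which gives the asserted isomorphism of posets.
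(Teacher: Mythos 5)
Your proof is correct and follows essentially the same route as the paper's: part (1) via faithfulness of the forgetful functor together with monicity of $s$ in $\C$, and part (2) via the common index, the cone legs out of the coalgebra generated by a point of that index, naturality of the cone against the inclusion, monicity in $\bcoalg$, and evaluation at the generating point. One terminological quibble: the obstacle in part (2) is that the forgetful functor need not \emph{preserve} monomorphisms (being faithful, it always reflects them), but this aside does not affect your argument.
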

    \begin{proof}

    Consider a subobject $S$ of $C$ for which $\chi_C$ restricts to a morphism $\chi_S: S \to FS$. Then $(S, \chi_S)$ is a coalgebra, and there is a total algebra homomorphism $s: (S, \chi_C) \to (C, \chi)$. Consider another coalgebra $(D, \chi_D)$ and two morphisms $f,g : (C, \chi) \to (D, \chi_D) $ such that $sf = sg$. Then the underlying morphisms of $f$ and $g$ coincide, so $f = g$.

     Now suppose that $F$ is well-equipped. Consider a subcoalgebra $\iota: (S, \chi_S) \rightarrowtail (C, \chi)$, and two points $x, y: * \to S$ in $\C$ such that $\iota x = \iota y$. Thus $\ind{x} = \ind{\iota x} = \ind{\iota y} =  \ind{y}$, and let $((I, \chi_I), \top_n)$ denote a coalgebra generated by a point of this index. Then we get two maps $\overline x, \overline y: (I, \chi_I) \to (S, \chi_S)$ and a map $\overline{\iota x} = \overline{\iota y} : (I, \chi_I) \to (C, \chi)$ such that $\iota \overline x = \overline{\iota x} = \overline{\iota y} = \iota \overline y$. Since $\iota$ is a monomorphism, $\overline x = \overline y$, but then $x = \overline x \top_n = \overline y \top_n = y: * \to \C$.
    \end{proof}

    \begin{example}\label{ex: subcoalgebras for id + 1}
        Consider the endofunctor $\id + 1$ on $\Set$. The indices are the elements of $\eN$. The coalgebra $\standcoalg{n}$ is freely generated by a point of index $n$ for finite $n$, and $\I$ is freely generated by a point of index $\infty$.

        Thus there is an isomorphism between the poset of subcoalgebras of any coalgebra $(C, \chi)$ and the poset of subsets $S$ of $C$ for which $\chi$ restricts to a morphism $\chi_S: FS \to S$.
    \end{example}
\subsection{Axioms for \texorpdfstring{\cref{definition: lax symmetric monoidal}}{Definition \ref{definition: lax symmetric monoidal}}}\label{subsection appendix: diagrams for lax symmetric monoidal functors}
The triplet $(F, \nabla, \eta)$ must satisfy the following commutative diagrams.
\begin{description}
    \item[Associative] For all objects $X,Y, Z\in \C$, we have
\[
\begin{tikzcd}[row sep= large]
 F(X)\otimes F(Y)\otimes F(Z) \ar{r}{\nabla_{X,Y}\otimes \id}\ar{d}[swap]{\id\otimes \nabla_{Y,Z}} & [2.5em] F(X\otimes Y) \otimes F(Z) \ar{d}{\nabla_{X\otimes Y, Z}}\\
 F(X)\otimes F(Y\otimes Z) \ar{r}{\nabla_{X, Y\otimes Z}} & F(X\otimes Y \otimes Z).
   \end{tikzcd}
   \]
   
   \item[Unital] For all objects $X\in \C$
\[
\begin{tikzcd}
F(X)\ar[equals]{ddrr}\ar{r}{\cong} & [-1em] \I\otimes F(X)  \ar{r}{\eta\otimes \id} & F(\I)\otimes F(X)\ar{d}{\nabla_{\I, X}} & F(X) \ar{r}{\cong} \ar[equals]{ddrr} & [-1em]F(X)\otimes \I \ar{r}{\id\otimes \eta} & F(X)\otimes F(\I) \ar{d}{\nabla_{X, \I}}\\
 & & F(\I\otimes X) \ar{d}{\cong} & & & F(X\otimes \I)\ar{d}{\cong}\\
 [-1em]  & & F(X) & & & F(X).
\end{tikzcd}
\]
\item[Commutative] For all objects $X,Y\in \C$
\[
\begin{tikzcd}
   F(X) \otimes F(Y) \ar{r}{\nabla_{X,Y}} \ar{d}{\cong}[swap]{\tau_{F(X),F(Y)}}& F(X\otimes Y)\ar{d}{F(\tau_{X,Y})} [swap]{\cong}\\
   F(Y) \otimes F(X) \ar{r}{\nabla_{Y,X}} & F(Y\otimes X),
\end{tikzcd}
\]
where $\tau_{X,Y}:X\otimes Y\rightarrow Y\otimes X$ is the symmetry natural isomorphism in $\C$.
\end{description}

\subsection{Proof of \texorpdfstring{\cref{theorem: equalizer formula for universal measuring}}{Theorem \ref{theorem: equalizer formula for universal measuring}}}\label{subsection appendix: equalizer formula for universal measuring}

\begin{proof}
By the cofree-forgetful adjunction, the coalgebra homomorphism $\mathsf{eq}:\ubalg(A,B) \to \cofree\big( \inthom{\C}{A}{B}\big)$ determines a map $\widetilde{\ev}:\ubalg(A,B)\rightarrow \inthom{\C}{A}{B}$ which must be a measuring.
We verify it is the universal one.
If $\psi:C\rightarrow \inthom{\C}{A}{B}$ is a $C$-partial homomorphism, then we obtain by the cofree-forgetful adjunction a coalgebra homomorphism $C\rightarrow \cofree\big( \inthom{\C}{A}{B}\big)$ which must equalize the two parallel homomorphisms. 
Thus by universal property of equalizers there must exists a unique coalgebra homomorphism $u_\psi:C\rightarrow \ubalg(A,B)$ such that the following diagram commutes:
\[
\begin{tikzcd}
   C \ar[dashed]{d}[swap]{u_\psi}\ar{dr}{\psi} &\\
   \ubalg(A,B) \ar{r}[swap]{\widetilde{\ev}} & \inthom{\C}{A}{B}.
\end{tikzcd}
\]
In other words, we have just shown that the universal property of the equalizer $(\ubalg(A,B), \mathsf{eq})$ is equivalent to the universal measuring $(\ubalg(A,B);\widetilde{\ev})$ which corresponds to the universal measuring $(\ubalg(A,B);\ev)$.
\end{proof}

\subsection{Proof of \texorpdfstring{\cref{prop:sym-mon}}{Proposition \ref{prop:sym-mon}} }\label{subsection appendix: details for symmetric monoidal structure}

We first need to check $(C, \chi_C)\otimes (\I, \eta)\cong (C, \chi_C)$. On objects, we have $C\otimes \I\cong C$ as $\I$ is a monoidal unit. The coalgebraic structures correspond as we have the following commutative diagram by unitality of $(F, \nabla, \eta)$:
\[
\begin{tikzcd}
   C \ar{r}{\cong}\ar{d}[swap]{\chi} & C\otimes \I \ar{dr}{\chi\otimes \eta} \\
   F(C) \ar[bend right=2em, equals]{rrrr} \ar{r}{\cong} & F(C)\otimes \I \ar{r}[swap]{\id\otimes \eta}& F(C)\otimes F(\I) \ar{r}{\nabla} & F(C\otimes \I)\ar{r}{\cong}  & F(C).
\end{tikzcd}
\]
Similarly, we have a natural identification $ (\I, \eta)\otimes (C, \chi_C)\cong (C, \chi_C)$ from the identification $\I\otimes C\cong C$. 
Moreover, we also have natural identifications:
\[
((C, \chi)\otimes (C', \chi')) \otimes (C'', \chi'') \cong (C, \chi)\otimes ((C', \chi') \otimes (C'', \chi'')),
\]
lifted from $(C\otimes C')\otimes C''\cong C\otimes (C'\otimes C'')$.
Lastly, the symmetry $C\otimes C'\cong C'\otimes C$ lifts to $\bcoalg$. Therefore as $\C$ is symmetric monoidal, the above natural identifications assemble to a symmetric monoidal structure on $\bcoalg$ as the pentagon, triangle, hexagon and symmetric identities remain valid in $\bcoalg$.

\subsection{Proof of \texorpdfstring{\cref{thm:enriched}}{Theorem \ref{thm:enriched}}}\label{subsection appendix: algebras enriched over coalgebras}

\begin{proof}
The convolution algebra functor $[-,-]:\bcoalg^\op\times \balg\rightarrow \balg$ presents $\balg$ as a right module over $\bcoalg^\op$. 
Thus $\balg^\op$ is a left module over $\bcoalg$ via \[[-,-]^\op\colon \bcoalg\times \balg^\op \rightarrow \balg^\op.\] 
Moreover, for any $A\in \balg$, the functor $[-,B]^\op\colon\bcoalg\rightarrow \balg^\op$ 
has a right adjoint $\ubalg(-, B):\balg^\op\rightarrow \bcoalg$ as shown in \cref{remark: universal measuring as an adjoint}.
Thus, by \cite[3.1]{measuring}, the category $\balg^\op$ is enriched over $\bcoalg$ with tensor $[-,-]^\op$. 
Therefore $\balg$ is enriched over $\bcoalg$ with power $[-,-]$. 
\end{proof}

\subsection{Details for \texorpdfstring{\cref{theorem: mixed enriched GF over F}}{Theorem \ref{theorem: mixed enriched GF over F}}}\label{subsection appendix: details for mixed enriched GF over F}

If $A$ is a $GF$-algebra, and $C$ is an $F$-coalgebra, then we obtain a $GF$-algebra structure on $[C,A]=\inthom{\C}{C}{A}$ as follows.
Define $GF\big(\inthom{\C}{C}{A}\big) \rightarrow \inthom{\C}{C}{A}$ as the adjunct of:
\[
GF \big( \inthom{\C}{C}{A}\big) \otimes C \rightarrow GF\big( \inthom{\C}{C}{A}\big) \otimes F(C) \rightarrow G\big( F( \inthom{\C}{C}{A}) \otimes F(C) \big) \rightarrow G(F(A)) \rightarrow A.
\]
The first map is induced by the $F$-coalgebra structure $C\rightarrow F(C)$, the last map is induced by the $GF$-structure $GFA\rightarrow A$. 
The penultimate map is applying $G$ to the map  $F( \inthom{\C}{C}{A}) \otimes F(C)\rightarrow F(A)$ induced by the lax symmetric monoidal structure on $F$ and evaluation.
The second map is induced by the $\C$-enrichment of $G$ that produces a map $G(X)\otimes Y\rightarrow G(X\otimes Y)$ for any objects $X$ and $Y$ in $\C$. Indeed, this is the adjunct of:
\[
Y\rightarrow \inthom{\C}{X}{X\otimes Y} \rightarrow \inthom{\C}{G(X)}{G(X\otimes Y)},
\]
where the first map is the coevaluation of the internal hom, and the second map is induced by the $\C$-enrichment.

\subsection{Details for \texorpdfstring{\cref{example: further list of examples}}{Example \ref{example: further list of examples}}}
\label{ssec: details for big example}

\begin{example}
 The following endofunctors on a locally presentable, symmetric monoidal category $\C$ are always accessible and lax symmetric monoidal. 
 \begin{description}
        \item[$(\id)$] The identity endofunctor $\id_\C$ has a trivial lax symmetric monoidal structure, and preserves filtered colimits.
        \item[$(A)$] Given a commutative monoid $(A, \eta, \nabla) \in \C$, the endofunctor that sends each object to $A$ has a lax symmetric monoidal structure. Indeed, $\iota$ and $\nabla$ are exactly the structure required for this functor to be lax symmetric monoidal. This preserves filtered colimits.
        \item[$(GF)$] Suppose that $(F, \eta_F, \nabla_F)$ and $(G, \eta_G, \nabla_G)$ are lax symmetric monoidal endofunctors on $\C$. Then so is the composition $GF$. The map $\eta$ is the composition $\I \xrightarrow{\eta_G} G (\I) \xrightarrow{G \eta_F} GF(\I)$. The map $\nabla$ is the composition $GF(X) \otimes GF(Y) \xrightarrow{\nabla_G} G(F(X) \otimes F(Y)) \xrightarrow{G\nabla_F} GF(X \otimes Y)$. If $F$ and $G$ both preserved filtered colimits, then so does $GF$.
        \item[$(F \otimes G)$] Suppose that $\C$ is closed and that $(F, \eta_F, \nabla_F)$ and $(G, \eta_G, \nabla_G)$ are lax symmetric monoidal endofunctors on $\C$. Then so is $F \otimes G$. The map $\eta$ is the composition $\I \to \I \otimes \I \xrightarrow{\eta_F \otimes \eta_G} (F \otimes G)(\I)$. The map $\nabla$ is the composition $(F \otimes G)(X) \otimes (F \otimes G)(Y) \cong (FX \otimes FY) \otimes (GX \otimes GY) \xrightarrow{\nabla_F \otimes \nabla_G} (F \otimes G)  (X \otimes Y)$. If $F$ and $G$ are accessible, then so is $F \otimes G$ (because $\otimes$ preserves colimits in each variable, as $\C$ is closed). 
        \item[$(F + G)$] Suppose $\C$ is closed and that $(F, \eta_F, \nabla_F)$ is a lax symmetric monoidal endofunctor, and consider a pair $(G, \nabla_G)$ of an endofunctor $G: \C \to \C$ and transformation $GX \otimes GY \to G(X \otimes Y)$ which is associative and symmetric (i.e. a lax symmetric monoidal endofunctor but without $\eta$), and suppose that $G$ is both a left and right module over $F$, meaning that there are natural transformations $\lambda: FX \otimes GY \to G(X \otimes Y)$ and $\rho: GX \otimes FY \to G(X \otimes Y)$ satisfying the diagrams of \cite[Def.~39]{yetter}, where the concept of module over a monoidal functor is defined. Then $F + G$ is a lax symmetric monoidal functor. We set $\eta$ to be the composition $\I \xrightarrow{\eta_F} F (\I) \rightarrow F(\I) + G(\I) \cong (F + G)(\I)$. The map $\nabla$ is given by the following composition.
        \begin{align*}
        \hspace{-1em}
            (F + G)(X) \otimes (F + G)(Y) &\cong (FX \otimes FY) + (FX \otimes GY) + (GX \otimes FY) + (GX \otimes GY) \\ &\xrightarrow{\langle \nabla_F , \lambda, \rho, \nabla_G \rangle} F(X \otimes Y) + G(X \otimes Y) \\&\cong (F+G)(X \otimes Y)
        \end{align*} 
        If $F$ and $G$ preserve filtered colimits, then so does $F + G$.
    \item[$(\id^A)$] Suppose that $\C$ is cartesian (so we denote it $(\C,\term ,\times)$). Consider an exponentiable object $A$ of $\C$, meaning that the functor $\id \times A$ has a right adjoint, denoted $\id^A$. This has a lax symmetric monoidal structure. We let $\term \to \term^A$ and $X^A \times Y^A \to (X \times Y)^A$ be the isomorphisms we get from the fact that $-^A$, as a right adjoint, preserves limits. It is accessible as a right adjoint.
    \item[($W$-types)] Consider the cartesian monoidal structure on $\Set$. Consider a commutative monoid $C \in \Set$, which can be considered as a commutative monoidal discrete category, and a function $f: A \to C$ such that the preimage functor $f^{-1}: C \to \Set$ has an oplax symmetric monoidal structure. Then the polynomial endofunctor (whose initial algebras are often called \emph{W-types}) $\Sigma_C \Pi_f A^*: \C \to \C$ has a lax symmetric monoidal structure. The elements of $\Sigma_C \Pi_f A^*X$ are pairs $(c \in C, g : f^{-1} c \to X)$. The natural transformation $\nabla_{X,Y} : \Sigma_C \Pi_f A^*X \times \Sigma_C \Pi_f A^*Y \to \Sigma_C \Pi_f A^*(X \times Y)$ takes a pair $((c, g : f^{-1} c \to X), (d, h : f^{-1} d \to X))$ to $(cd, (g \times h) \circ \Lambda_{c,d}: f^{-1} (cd) \to X \times Y)$ where $\Lambda_{c,d}: f^{-1} (cd) \to f^{-1} c \times f^{-1}d$ is part of the oplax monoidal structure of $f^{-1}$. The natural transformation $\epsilon: * \to \Sigma_C \Pi_f A^* * \cong C$ is the unit of the monoidal structure on $C$. This polynomial endofunctor is accessible since it is the composition of left and right adjoints.
    \item[(d.e.s.)] Suppose that the monoidal structure on $\C$ is cocartesian, so we denote it by $(\C,0, +)$, and suppose also that $\C$ also has binary products that preserve filtered colimits in each variable. Consider a discrete equational system on $\C$, that is, a pair $(A,M)$ where $A$ is a set and $M$ is an object of $\C^{A \times A}$. The functor category $\C^A$ inherits a cocartesian monoidal structure $(\C^A ,0, +)$ pointwise. Define an endofunctor $M \otimes - : \C^A \to \C^A$ by setting
    \[ (M \otimes X)_a := \sum_{b \in A} M_{b,a} \times X_b. \]
    This is lax symmetric monoidal. The morphisms $\eta: 0 \to M \otimes 0$ and $\nabla: M \otimes (X + Y) \to M \otimes X + M \otimes Y$ are induced by the universal properties of $0$ and $+$. It is also accessible since the coproducts and binary products preserve filtered colimits.
\end{description}
\end{example}

\subsection{Details for \texorpdfstring{\cref{prop:map from initial alg to dual}}{Proposition \ref{prop:map from initial alg to dual}}}
\label{details:prop:map from initial alg to dual}

We consider an object $\lim^C\id_\ubalg$ with the following universal property (if it exists).
\[ \balg(A,\lim^C\id_\ubalg) \cong \lim_{X \in \balg} \ \bcoalg (C, \ubalg(A,X))  \]
Since $\bcoalg (C, \ubalg(A,-))$ preserves limits and $\lim_{X \in \balg} X \cong \N$, we have
\begin{align*}
    \lim_{X \in \balg} \ \bcoalg (C, \ubalg(A,X)) &\cong \bcoalg (C, \ubalg(A,\lim_{X \in \balg} X)) \\
    &\cong \bcoalg (C, \ubalg(A,\N)) \\
    &\cong \balg(A, [C,\N]) \\
    &= \balg(A, C^*)
\end{align*}
Thus, $C^*$ has the universal property of $\lim^C\id_\ubalg$.

Now note that for a $C$-initial algebra $A$, we have $\lim_{X \in \balg} \ \bcoalg (C, \ubalg(A,X)) \cong 1$. Thus, by this universal property of $C^*$, there is a unique map $A \to C^*$.

\end{document}